\documentclass[reqno,11 pt]{amsart}
\usepackage{graphicx} % Required for inserting images
\usepackage{amsmath,amssymb,latexsym,float}
\usepackage{amsthm}
\usepackage{color}  
\usepackage[all]{xy}
\usepackage{tikz-cd}
\usepackage{accents} 
\usepackage{lmodern}
\usepackage{multirow}
\usepackage{multicol}
\usepackage{longtable}
\usepackage{caption}
\usepackage{cite}
\usepackage{enumitem}
\usepackage{listings}
\usepackage{xcolor}
\usepackage{placeins}

\usepackage[colorlinks=true,
            linkcolor=orange,
            citecolor=cyan,
            urlcolor=blue]{hyperref}
\usetikzlibrary{cd}
\usepackage[left=0.25in,right=0.25in, top= -0.5in, bottom= -0.5 in]{geometry}
\usepackage{array}
\usepackage{changepage}
\numberwithin{equation}{section}

\newcommand{\F}{{\mathbb{F}}}
\newcommand{\Z}{{\mathbb{Z}}}
\newcommand{\Q}{{\mathbb{Q}}} 
\newcommand{\R}{{\mathbb{R}}} 
\newcommand{\C}{{\mathbb{C}}} 
\newcommand{\A}{{\mathbb{A}}}

\newcommand{\p}{{\mathfrak{p}}}
\newcommand{\OO}{{\mathcal{O}}}
\newcommand{\tors}{{\mathrm{tors}}}

\newcommand{\Gal}{\mathrm{Gal}}

\newcommand{\rk}{\mathrm{rank}}

\newcommand{\tr}{\mathrm{Tr}}
\newcommand{\gl}{\mathrm{GL}}

\newcommand{\la}{{\langle}}
\newcommand{\ra}{{\rangle}}
\newcommand{\sel}{\mathrm{Sel}}
\newcommand{\fr}{{\mathrm{Frob_p}}}

\makeatletter
\newcommand*{\rom}[1]{\expandafter\@slowromancap\romannumeral #1@}
\makeatother

\DeclareFontFamily{U}{wncy}{}
    \DeclareFontShape{U}{wncy}{m}{n}{<->wncyr10}{}
    \DeclareSymbolFont{mcy}{U}{wncy}{m}{n}
    \DeclareMathSymbol{\Sh}{\mathord}{mcy}{"58}

\lstdefinelanguage{Sage}{
  language=Python,
  morekeywords={var, assume, simplify, factor, show, expand, diff, integrate, solve},
}

\theoremstyle{plain}
\newtheorem{theorem}{Theorem}[section]
\newtheorem*{theorem*}{Theorem}
\newtheorem{proposition}[theorem]{Proposition}
\newtheorem{rem}[theorem]{Remark}
\newtheorem{lemma}[theorem]{Lemma}
\newtheorem{corollary}[theorem]{Corollary}

\title{Explicit  mock Heegner points and BSD formula on certain Mordell curves}
\author{Sumita Gunri, Somnath Jha,  Dipramit Majumdar}
\address{\begin{scriptsize}Sumita Gunri, \href{mailto:sumitagunri111@gmail.com}{sumitagunri111@gmail.com}, Department of Mathematics, IIT Madras, Chennai 600036, INDIA.\end{scriptsize}}
\address{\begin{scriptsize}Somnath Jha, \href{mailto:jhasom@gmail.com}{jhasom@gmail.com}, Department of Mathematics, IIT Kanpur, Kanpur 208016, India\end{scriptsize}}
\address{\begin{scriptsize}Dipramit Majumdar, \href{mailto:dipramit@gmail.com}{dipramit@gmail.com}, Department of Mathematics, IIT Madras, Chennai 600036, INDIA,  and Institut für Mathematik, Universität Heidelberg, GERMANY.\end{scriptsize}}

\subjclass[2020]{11G05, 11F11, 11D25, 11G40}
\thanks{Keywords: Elliptic Curve, Heegner Point, Gross-Zagier Formula, BSD Conjecture, Diophantine problem.}
\begin{document}
\begin{abstract}
   For a natural number $a$, let $E_{2a}$ be the Mordell elliptic curve  $X^3 +Y^3=2a$.  We give an explicit construction of (mock) Heegner point on the Mordell curve $E_{2p}$ for a prime  $p\equiv 4 \mod 9$ and $E_{2p^2}$ for a prime $p\equiv 7 \mod 9$, under the assumption that $2$ is not a cube modulo $p$. We also verify the explicit Gross-Zagier formula for these curves and go on to show that the BSD formula holds for these curves up to a $2$-adic unit. Using a result of Burungale-Flach, we show that the full BSD formula holds for the rank zero curve $E_{2p}$ for $p\equiv 7 \mod 9$ and $E_{2p^2}$ for $p\equiv 4 \mod 9$, whenever $2$ is not a cube modulo $p$.
\end{abstract}

\maketitle

\vspace{-1cm}
\section*{Introduction}

Solutions to many  Diophantine problems are intricately related to rational points on CM elliptic curves. Let us call an integer $n$ a rational cube sum if there exist two rational numbers $a$ and $b$ such that $n=a^3+b^3$. The study of integers $n$ that are rational cube sums has a rich history and can be traced back to the classical works of Sylvester \cite{sylvester}, Selmer \cite{selmer}, Satg\'e \cite{satge}, {Lieman \cite{lieman}} and up to a very recent work of Alp{\"o}ge-Bhargava-Shnidman-Burungale-Skinner \cite{alpoge-bhargava-shnidman}. Without any loss of generality, we may assume that $n$ is cube-free and greater than $2$. Then the elliptic curve $Y^3+X^3=nZ^3$, expressed in Weierstrass form  as $E_n:Y^2=X^3-432n^2$, has $E_n(\Q)_{\tors}=0$ and hence $n$ is a rational cube sum $\Leftrightarrow \text{rank}_\Z \ E_n(\Q) > 0$. Let $L(s,E_n)$ be the complex $L$-function of $E_n/\Q$. Then recall that the rank part of the Birch and Swinnerton-Dyer (BSD) conjecture predicts that $\text{rank}_\Z \ E_n(\Q) =\mathrm{ord}_{s=1} L(s, E_n)$. 

If we restrict ourselves to prime numbers, then by a classical computation of Birch-Stephens, the global root number $\omega(E_p/\Q):=(-1)^{\text{ord}_{s=1} L(s, \,E_p)}$,  the sign of the functional equation $L(s, E_p)$  is $-1$ for $p\equiv 4,7,8 \mod 9$ and  is $+1$  for $p\equiv 1,2,5 \mod 9$ and hence the expressibility of primes as rational cube sums in the congruence classes modulo $9$ has also been studied in great detail in the literature. There is a conjecture, typically
attributed to Sylvester (see \cite{mock heegner}), which predicts that  primes  $p \equiv
4,7,8 \mod 9$ are rational cube sums. 
Dasgupta-Voight \cite{mock heegner} showed that for  primes $p \equiv 4,7
\mod 9$, both $p$ and $p^2$ are rational cube sums provided $3$ is
not a cube modulo $p$. The case $ 8 \mod 9$ seems more subtle and there are partial results by Yin \cite{yin} (also see \cite{majumdar-sury, jha-majumdar-sury}). For a composite integer $n$ with not too many prime factors, cube sum problem has also been considered, although the results are far more scarce. In a classical work, Satg\'e \cite{satge} studied the cube sum problem for $2p$ (for odd prime $p \equiv 2 \mod 9$) and $2p^2$ (for prime $p \equiv 5 \mod 9$). He used Weber functions and his proof can be reinterpreted as mock Heegner point construction on $E_{2p}$ (see \cite{survey article}). Such methods have also been employed by \cite{coward, cube sum, shu-yin} and others. In this article, we consider the case $n=2p, 2p^2$, where $p$ is a prime with $p \equiv 4,7 \mod 9$. For ease of notation,  set
\begin{equation}\label{qdef}
q = \begin{cases} p & \text{ if } p \equiv 4 \mod 9, \\
p^2 & \text{ if } p \equiv 7 \mod 9.
\end{cases}\end{equation}
The global root number of $E_{2q}/\Q$, $\omega(E_{2q}/\Q)=-1$. On the other hand, using $3$-descent, it was shown in \cite{majumdar-shingavekar, cube sum jnt} that the $3$-Selmer group of $E_{2q}$ over $\Q$, $\sel^3(E_{2q}/\Q)$ satisfies $\dim_{\F_3} \ \sel^3(E_{2q}/\Q) \\=1$, so that $\mathrm{rank}_\Z \  E_{2q}(\Q) \leq 1$. Hence the BSD (parity) conjecture predicts that $\mathrm{rank}_\Z \ E_{2q}(\Q) =1$. In this paper, under the assumption that $2$ is not a cube modulo $p$, we show that $\mathrm{rank}_\Z \ E_{2q}(\Q) =1$ and in particular, $2q$ is a cube sum. 

More precisely, we start with an explicit mock Heegner point $P_1$ on the elliptic curve $E:y^2=x^3+1$ defined over the ring class field $H_{6p}$ of conductor $6p$ of $K=\Q(\sqrt{-3})$. We largely follow the framework used in the existing literature for the construction of the candidate Heegner point, however the proof that $S_1 = \tr_{H_{6p}/K(\sqrt[3]{p})} P_1$ is a nontorsion point in $E(K(\sqrt[3]{p}))$ is different from the earlier works. The novelty lies in constructing another explicit torsion point $P' \in E(H_{6p})$ that is independent of $p$, such that the reduction of $P_1$ modulo $p$ is the image under the Frobenius map of the reduction of $P'$ modulo $p$. The argument used to show $P_1$ is nontorsion (Theorem \ref{nontorsion}), requires that $2$ is not a cube modulo $p$. Using the cubic twist isomorphism, we are then able to construct a nontorsion point in $E_{2q}(K)$. Since $\mathrm{rank}_\Z \  E_{2q}(K) = 2 \, \mathrm{rank}_\Z \  E_{2q}(\Q)$, it follows that $\mathrm{rank}_\Z \ E_{2q}(\Q) >0$ and hence $\mathrm{rank}_\Z \ E_{2q}(\Q)=1$. The construction of the nontorsion point using mock Heegner point allows us to use the `variation of the Gross-Zagier formula' developed in \cite{G-Z formula}. Let  $\chi: \Gal({K^\mathrm{ab}}/K) \to \C^\times$ be the cubic character given by  $\chi(\tau): = (\sqrt[3]{p})^{\tau -1}$. The base change $L$-function $L(s, E, \chi)$ over $K$  has global root number $-1$ and factors as $$L(s, E, \chi)=L(s, E_{2q})L(s, E_{2q^2}).$$ The explicit Gross-Zagier formula allows us to relate the derivative $L'(1,E, \chi)$ with the height of $S_1$ and since $S_1$ is nontorsion, it follows that $\mathrm{ord}_{s=1} L(s,E_{2q})=1$. Consequently, by works of Gross-Zagier \cite{gross-zagier}, Kolyvagin \cite{kolyvagin} and Rubin \cite{rubin}, the Tate-Shafarevich group of $E_{2q}$ over $\Q$, $\Sh(E_{2q}/\Q)$ is a finite group. Finally, we compute the index of the Heegner point to prove: 
\begin{theorem}\label{main}
    Let $p \equiv 4,7 \mod 9$ be a prime such that $2$ is not a cube in $\F_p$. Then the elliptic curve $E_{2q}: y^2=x^3-27q^2$ satisfies the rank part of the BSD conjecture, that is
    $$\mathrm{ord}_{s=1} L(E_{2q}, 1) = \mathrm{rank}_\Z \ E_{2q}(\Q)=1.$$
   In particular, $2q$ is a rational cube sum.  Moreover, the order of the finite group $ \Sh(E_{2q}/\Q)$ is
    $$ |\Sh(E_{2q}/\Q)|=  \frac{L'(E_{2q},1) |E_{2q}(\Q)_{\tors}|^2}{\Omega_{E_{2q}} \mathrm{Reg}(E_{2q}) \prod_{v} c_{v}(E_{2q})} u, \qquad u \in \Z\Big[\frac{1}{2}\Big]^\times     $$
    as predicted by the BSD conjecture, up to a unit in $\Z\big[\frac{1}{2}\big]$. \qed
\end{theorem}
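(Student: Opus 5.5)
The plan follows the outline sketched in the introduction.

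\textbf{Step 1: a nontorsion point.} Starting from the explicit mock Heegner point $P_1\in E(H_{6p})$ on $E:y^2=x^3+1$, I will form the trace $S_1=\tr_{H_{6p}/K(\sqrt[3]{p})}P_1$. To show $S_1$ has infinite order (Theorem \ref{nontorsion}), I will reduce modulo a prime above $p$. The key input is an explicit torsion point $P'\in E(H_{6p})$, independent of $p$, such that
\[
\widetilde{P_1}=\mathrm{Frob}\bigl(\widetilde{P'}\bigr).
\]
Since the Galois conjugates of $P_1$ in the trace are compatible with reduction, the reduction of $S_1$ becomes an explicit combination of images of $P'$. The hypothesis that $2$ is not a cube mod $p$ should force this combination to avoid the (small) image of $E(K(\sqrt[3]{p}))_{\tors}$, so $S_1$ is nontorsion.

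\textbf{Step 2: descend to $E_{2q}(\Q)$ and get the rank and analytic order.} Decompose $S_1$ under $\Gal(K(\sqrt[3]{p})/K)$ into its $\chi$- and $\chi^{-1}$-isotypic parts. The cubic twist isomorphism $E\simeq E_{2q}$ over $K(\sqrt[3]{p})$ identifies these parts with points of $E_{2q}(K)$ and $E_{2q^2}(K)$. The component landing in $E_{2q}(K)$ is nontorsion; this uses the root-number and $3$-Selmer information, since $E_{2q^2}$ has root number $+1$. The relation $\mathrm{rank}\,E_{2q}(K)=2\,\mathrm{rank}\,E_{2q}(\Q)$ together with $\dim\sel^3(E_{2q}/\Q)=1$ then gives $\mathrm{rank}\,E_{2q}(\Q)=1$, and in particular $2q$ is a cube sum.

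Next I will apply the variation of the Gross--Zagier formula of \cite{G-Z formula} to $L(s,E,\chi)=L(s,E_{2q})L(s,E_{2q^2})$. It gives
\[
L'(1,E,\chi)=c\cdot\hat h(S_1^\chi)\neq 0,
\]
with an explicit constant $c$. Hence $L(1,E_{2q^2})\neq0$ and $\mathrm{ord}_{s=1}L(s,E_{2q})=1$. Gross--Zagier--Kolyvagin (with Rubin in the CM case) then gives finiteness of $\Sh(E_{2q}/\Q)$.

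\textbf{Step 3: the BSD formula up to a $2$-adic unit.} Write the Heegner component as $S=[E_{2q}(\Q):\Z R]\cdot R$ for a generator $R$ modulo torsion, so that $\mathrm{Reg}(E_{2q})=\hat h(S)/[E_{2q}(\Q):\Z R]^2$. Substituting into the explicit Gross--Zagier formula and dividing by $L(1,E_{2q^2})$ expresses $L'(E_{2q},1)/(\Omega\,\mathrm{Reg})$ as
\[
(\text{index})^2\times(\text{explicit periods and Tamagawa data})\times L(1,E_{2q^2})^{-1}.
\]
On the rank-zero side I will use the known BSD formula, up to $2$, for the CM curve $E_{2q^2}$; this follows from Rubin's main conjecture for $p\ne 2,3$, with the $3$-part handled separately, e.g.\ via \cite{cube sum jnt} or Burungale--Flach. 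Combining the two, BSD for $E_{2q}$ reduces to the identity
\[
[E_{2q}(\Q):\Z R]^2\cdot(\text{explicit factor})=|\Sh(E_{2q}/\Q)|\cdot|\Sh(E_{2q^2}/\Q)|^{-1}\cdot(\text{explicit factor}).
\]

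\textbf{Main obstacle.} The hardest step is the $3$-part of the index $[E_{2q}(\Q):\Z R]$. Kolyvagin's bound gives $|\Sh(E_{2q})|$ dividing the squared index away from finitely many primes. For the exact equality at $3$, I would compute the $3$-adic valuation of the index directly: check whether $S$ is divisible by $3$ via its image in $\sel^3$, which is one-dimensional, and compare with $|\Sh[3^\infty]|$ from the $3$-descent in \cite{majumdar-shingavekar, cube sum jnt}. Odd primes $\ell\neq3$ would be handled by Rubin's main conjecture or Kolyvagin, while $\ell=2$ is left as the unit $u\in\Z[\frac12]^\times$.
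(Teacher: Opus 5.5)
\textbf{Step 1 fails as written.} Reducing $S_1$ itself modulo primes above $p$ cannot show that it has infinite order, because that reduction is trivial. In fact $S_1=3R_1$, where $R_1=\tr_{H_{3p}/L}(P_1-T)$. Here $T=(-\zeta\sqrt[3]{4},-\sqrt{-3})$ is a $3$-torsion point with $T^{\sigma_{\zeta_2}^{-1}}=T+(0,1)$ and $\tr_{H_{6p}/H_{3p}}T=\OO$. The reduction of $R_1$ in $E(\F_p)\times E(\F_p)$, at the two primes over $\p$ and $\overline{\p}$, is $(\OO,(0,\pm1))$, a $3$-torsion point. Hence $\widetilde{S_1}=(\OO,\OO)$, which is the reduction of the torsion point $\OO$. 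No analysis of ``the reduction of $S_1$ as a combination of images of $P'$'' can get around this.

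The missing idea is to work with this ``third'' of $S_1$:
\begin{itemize}
\item Subtract the auxiliary point $T$, so that $Q=P_1-T$ descends to $H_{3p}$.
\item Use total ramification of the primes above $p$ in $H_{3p}/L$ to get $\widetilde{R_1}=\frac{p-1}{3}\widetilde{Q}$.
\item Use the hypothesis that $2$ is not a cube mod $p$ concretely. It makes the primes above $p$ inert in $H_{6p}/H_{3p}$, and it gives $4^{(p-1)/3}=v\neq 1$. That is what makes $\widetilde{Q}=\widetilde{P'}-\fr(\widetilde{T})$ come out as $((-v^2,0),(2v,-3))$.
\end{itemize}
Even then, one prime is not enough. $\widetilde{R_1}$ is $\OO$ at one prime and $(0,\pm1)$ at the other, and each of these is the reduction of a point of $E(L)^{\delta_p=\zeta^k}_{\tors}=\{\OO,(0,\pm1)\}$. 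Only the non-diagonal pair excludes torsion.

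\textbf{The $3$-part of Step 3, which you rightly flag as the crux, has no mechanism.} Your test ``is $S$ divisible by $3$, via its image in $\sel^3$'' targets the wrong object. $S=\phi_k(S_1)$ lies in $E_1^{(q)}(K)$, not in $E_{2q}(\Q)$, and it is divisible by $3$ there, since $S=3R$.

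The correct bookkeeping is $S=(1-\zeta)Z$ with $Z=\phi_k((1-\zeta^2)R_1)\in E_1^{(q)}(\Q)$; rationality of $Z$ uses $\overline{P_1}=-\zeta^2P_1$. Then $\widehat{h}_\Q(S)=3\widehat{h}_\Q(Z)$, which cancels exactly the $\frac13$ in $\frac{L'(1,E_1^{(q)})L(1,E_2^{(q)})}{\Omega_1^{(q)}\Omega_2^{(q)}}=\frac13\widehat{h}_\Q(S)$. The Selmer computations make $\Sh[3^\infty]$ trivial for both twists, so what remains is $Z\notin 3E_1^{(q)}(\Q)+E_1^{(q)}(\Q)_{\tors}$.

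You give no way to compute the Selmer image of $Z$. The paper uses the same reduction again:
\begin{itemize}
\item $Z=3W_1+\text{torsion}$ would force $R_1=(1-\zeta)X+Y$, with $X\in E(L)^{\delta_p=\zeta^k}$ and $Y$ torsion (Lemma \ref{no R_1=aX+Y}).
\item $\delta_p$ acts trivially modulo the totally ramified primes above $p$, so $(1-\zeta)\widetilde{X}=\OO$.
\item Then $\widetilde{R_1}=\widetilde{Y}$ would be diagonal, a contradiction.
\end{itemize}

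Smaller points:
\begin{itemize}
\item Your final identity should contain $|\Sh(E_{2q^2}/\Q)|$, not its inverse.
\item Kolyvagin gives only divisibility. Equality at $\ell\nmid 6p$ needs Perrin-Riou \cite{perrin-riou} and Kobayashi \cite{kobayashi}.
\item The additive prime $\ell=p$ needs a separate input; the paper uses \cite{li-liu-tian}.
\item In Step 2 no root-number argument is needed, since $S_1^{\delta_p}=\zeta^kS_1$ already puts $S_1$ in a single eigenspace.
\end{itemize}
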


 If $2$ is not a cube in $\F_p$, then  for primes $p\equiv 4,7 \mod 9$, we have  $ \mathrm{rank}_\Z \ E_{2q^2}(\Q)=0$ (see \cite{majumdar-shingavekar, cube sum jnt}). As a consequence of the explicit Gross-Zagier formula associated to the constructed mock Heegner point $S_1$, we strengthen the above result using \cite{burungale-flach} and establish the following:

\begin{theorem}\label{mainthm2}
    Let $p \equiv 4,7 \mod 9$ be a prime such that $2$ is not a cube in $\F_p$. Then the rank zero elliptic curve $E_{2q^2}: y^2=x^3-27q^4$ satisfies the full BSD conjecture. \qed
\end{theorem}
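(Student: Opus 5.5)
Theorem \ref{mainthm2} will be obtained by combining the explicit Gross-Zagier formula for the mock Heegner point $S_1$ with the result of Burungale-Flach. Their theorem, for CM elliptic curves of analytic rank zero, gives the $\ell$-part of the BSD formula for all primes $\ell$ outside a small explicit set, which includes $\ell=2$ and $\ell=3$ in our situation. Those primes then have to be handled separately. The plan therefore has three parts: show that $L(E_{2q^2},1)\neq 0$ and compute $L(E_{2q^2},1)/\Omega_{E_{2q^2}}$ up to a controlled factor; invoke Burungale-Flach for the primes $\ell\ge 5$; and settle the $2$- and $3$-parts by a direct argument.

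\textbf{Step 1: nonvanishing and the algebraic $L$-value.} The factorization $L(s,E,\chi)=L(s,E_{2q})L(s,E_{2q^2})$ differentiates at $s=1$ to
$$L'(1,E,\chi)=L'(1,E_{2q})\,L(1,E_{2q^2}),$$
because $L(1,E_{2q})=0$ by the odd root number. The explicit Gross-Zagier formula expresses $L'(1,E,\chi)$ as an explicit constant times $\hat h(S_1)$. By Theorem \ref{nontorsion}, $S_1$ is nontorsion, so $\hat h(S_1)\neq0$, and hence $L(1,E_{2q^2})\neq 0$. From Theorem \ref{main} we already have the index of the Heegner point and the $\mathrm{Reg}(E_{2q})$ contribution, up to powers of $2$. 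Dividing the Gross-Zagier identity by the BSD expression for $L'(1,E_{2q})$ then gives $L(1,E_{2q^2})/\Omega_{E_{2q^2}}$ as an explicit rational number, whose odd part can be read off. The Tamagawa numbers of $E_{2q^2}$ at $2$, $3$ and $p$ are computed by Tate's algorithm. Since $E_{2q^2}(\Q)_{\tors}$ is trivial, this should show that the $3$-part (and every odd part) of $L(1,E_{2q^2})/\Omega\prod c_v$ is as predicted.

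\textbf{Step 2: primes $\ell\ge5$ (and possibly $3$).} For these primes I would quote Burungale-Flach directly. Their result proves the $\ell$-part of the BSD formula for rank zero CM curves over $\Q$ at primes of good ordinary or supersingular type away from $2$ and, under some hypotheses, away from $3$. I would check that $E_{2q^2}$ satisfies their hypotheses. For the $3$-part, either their result applies, or I use that $\sel^3(E_{2q^2}/\Q)=0$ (from the $3$-descent of \cite{majumdar-shingavekar, cube sum jnt}, which gives rank $0$ and trivial $\Sh[3]$) together with Step 1 showing that the $3$-adic valuation of the $L$-value ratio is $0$.

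\textbf{Step 3: the prime $2$, which is the main obstacle.} Theorem \ref{main} is only valid up to $2$-adic units, so the $2$-part of $L(1,E_{2q^2})$ is not obtained from the index computation. I would use Burungale-Flach for $\ell=2$, since their paper handles $\ell=2$ for CM curves whose $2$-adic Iwasawa main conjecture inputs are available; this is presumably the reason the full formula is available for the rank zero curve and not for $E_{2q}$. If their $2$-part instead requires knowledge of the $L$-value, I would supply the $2$-adic valuation of $L(1,E_{2q^2})/\Omega$ from the explicit Gross-Zagier formula of Step 1. For this I would compute $\hat h(S_1)$ and the ratio $L'(1,E_{2q})/(\Omega\,\mathrm{Reg})$ $2$-adically via a $2$-descent on $E_{2q^2}$ (whose $2$-torsion field is $\Q(\sqrt[3]{2q^2},\sqrt{-3})$, with $2$ not a cube mod $p$ ensuring $\sel^2=0$). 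Matching these valuations against $\prod_v c_v$ then completes the proof.
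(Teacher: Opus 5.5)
Your Step 1 nonvanishing argument matches the paper: Theorem \ref{nontorsion} and Theorem \ref{G-Z} give $L'(1,E_1^{(q)})L(1,E_2^{(q)})\neq 0$, hence $L(1,E_{2q^2})\neq 0$. The paper then only notes that $\Sh(E_{2q^2}/\Q)$ is finite (Coates--Wiles, Rubin) and cites Burungale--Flach for the full BSD formula, with no separate treatment of $\ell=2,3$. Where you depart from this, the argument breaks.

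At $\ell=3$ you want to isolate $v_3\big(L(1,E_{2q^2})/\Omega\big)$ by dividing the Gross--Zagier identity by the BSD expression for $L'(1,E_{2q})$ taken from Theorem \ref{main}. Gross--Zagier only controls the product. Combined with the index computation ($3\nmid a$) it says
\[ \frac{L'(1,E_1^{(q)})}{\Omega_1^{(q)}\,\widehat{h}_\Q(W)}\cdot\frac{L(1,E_2^{(q)})}{\Omega_2^{(q)}}=a^2 , \]
and nothing in your proposal splits this $3$-adically into its two factors. In the paper, the $3$-part of Theorem \ref{main} is obtained precisely by inserting the rank-zero formula of Theorem \ref{mainthm2} into this product. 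So your Step 1 is circular at $3$. The vanishing of $\sel^3(E_{2q^2}/\Q)$ only gives $\Sh[3]=0$; it says nothing about the $3$-adic valuation of the $L$-value.

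At $\ell=2$ your fallback is also unavailable. Theorem \ref{main} holds only up to $\Z[\frac12]^\times$ precisely because the $2$-adic valuation of $L'(1,E_{2q})/(\Omega\,\mathrm{Reg})$ is unknown. A $2$-descent on $E_{2q^2}$ cannot determine an analytic quantity attached to a different curve. The descent input is also misstated. The $2$-torsion field of $y^2=x^3-27q^4$ is $\Q(\sqrt[3]{p},\sqrt{-3})$, coming from the roots of $x^3=27q^4$, not $\Q(\sqrt[3]{2q^2},\sqrt{-3})$. And the hypothesis that $2$ is not a cube mod $p$ is a $3$-descent condition with no evident bearing on $\sel^2$.

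Your opening claim that Burungale--Flach excludes $\ell=2,3$ here is inconsistent with your own Step 3 and with how the paper uses that result. Either that reading is wrong, in which case Steps 2--3 are superfluous, or it is right, in which case your proposal does not prove the statement. The proof closes only by applying Burungale--Flach at every prime once $L(1,E_{2q^2})\neq 0$ and $\Sh(E_{2q^2}/\Q)$ is finite, which is exactly the paper's argument. Drop the small-prime detours, including the appeal to Theorem \ref{main}, which logically comes after this result.
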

 \begin{rem} The elliptic curves $E_{2q^2}$ have global root number $\omega(E_{2q^2}/\Q) = +1$. When ordered by the absolute value, it is expected that for $100\%$ of $p$, $\mathrm{rank}_{\Z}\ E_{2q^2}(\Q) =0$, but there are infinitely many $p$ for which $\mathrm{rank}_{\Z}\ E_{2q^2}(\Q) >0$. Indeed, if $p \equiv 7 \mod{9}$, it follows from \cite{jha-majumdar-sury} that there are infinitely many $p$ for which $\mathrm{rank}_{\Z}\ E_{2q^2}(\Q) >0$, for example,  $\mathrm{rank}_\Z \ E_{2.(43^2)^2}(\Q) =2$.
 The hypothesis $2$ is not a cube modulo $p$ ensures that $\mathrm{rank}_{\Z}\ E_{2q^2}(\Q) =0$.
 \end{rem}
 \noindent {\bf Structure of the Article}
After the introduction, the article has $3$ sections. In Section \ref{section 2}, we discuss some basics of the modular curve $X_0(36)$ and give the construction of the Heegner point with relevant Galois action. Next, in Section \ref{sec:nontorsion}, we prove, using reduction mod $p$, that the constructed Heegner point is indeed nontorsion. Finally, in Section \ref{section 3} we compute the explicit Gross-Zagier formula and also verify the BSD conjecture for both rank zero and rank one curves.
 
\noindent {\bf Acknowledgment:} The third author thanks  Otmar Venjakov for hosting him at  Universit\"at of Heidelberg as a visiting researcher. Sumita Gunri is supported by PhD  fellowship at IIT Madras.

\section{\texorpdfstring{The modular curve $X_0(36)$ and Galois action on Heegner point}{The modular curve X0(36) and Galois action on Heegner point}}\label{section 2}

Let $n  $ be a cube-free positive integer with $n \neq 1, 2, 4$. Let  
$K=\Q(\zeta),  L= K(\sqrt[3]{n})$ where $\zeta=\frac{-1+\sqrt{-3}}{2}$ is a primitive cube root of unity in $\C$. Then $L/K$ is a cyclic cubic extension with $\Gal(L/K)= \la \tau \ra$, where the generator $\tau$ is determined by  $(\sqrt[3]{n})^{\tau -1}= \zeta$. Let $E/\Q$ be the elliptic curve given by the Weierstrass equation $E:=E_0: y^2 = x^3+1$ endowed with CM by $K$ and we fix an isomorphism $[.]: \OO_K\rightarrow \mathrm{End}_\C (E)$ by $[\zeta](x,y)=(\zeta x, y).$ We have $\rk_\Z \ E(K)=0$. Consider the cubic twists $E_1^{(n)}: y^2 = x^3+n^2$ and $E_2^{(n)}: y^2 = x^3+n^4$ of $E$ associated with $L/K$. For an abelian group $A$, $A_{\tors}$ denotes the torsion subgroup of $A$. For $n\in \mathbb{N}$, $A[n]=\{x\in A\mid nx=0\}$ and $A[p^\infty]=\cup_{n\geq 1} A[p^n]$.

\begin{proposition}

    We have $E(L)_{\tors} = E(K)_{\tors} \simeq \Z/6\Z \times \Z/2\Z$.
    Explicitly, $E(L)_{\tors}=$
    $$
    \left\{ \mathcal{O},
 (2 , 3 ),
 (0 , 1),
 (-1 , 0 ),
 (0 , -1),
 (2 , -3 ),
 (-\zeta , 0 ),
 (2\zeta^2 , 3 ),
 (2 \zeta , -3 ),
 (-\zeta^2 , 0 ),
 (2\zeta , 3 ),
 (2\zeta^2 , -3) \right\}.$$
\end{proposition}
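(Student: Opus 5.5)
Our plan has two parts. First we show that the twelve listed points lie in $E(K)_{\tors}$ and form a group isomorphic to $\Z/6\Z\times\Z/2\Z$. Then we show that $E(L)_{\tors}$ contains nothing more.

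For the first part, direct substitution into $y^2=x^3+1$ shows that each listed point lies on $E$. For example, $(2,3)$ gives $9=8+1$. The points $(-\zeta^i,0)$ give $0=-1+1$, since $\zeta^{3i}=1$. The points $(2\zeta^i,\pm3)$ give $9=8\zeta^{3i}+1$. The three points $(-1,0),(-\zeta,0),(-\zeta^2,0)$ together with $\mathcal O$ form the full $2$-torsion $E[2]\simeq(\Z/2\Z)^2$, since $x^3+1$ splits over $K$. The points $(0,\pm1)$ are the flexes with $x=0$, hence of order $3$. The point $(2,3)$ has order $6$ over $\Q$, because $2(2,3)=(0,1)$ (tangent slope $3x^2/2y=2$). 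Applying $[\zeta]$ produces the remaining points. So the listed set is the subgroup $\la (2,3)\ra\times\la(-\zeta,0)\ra\simeq \Z/6\Z\times\Z/2\Z$.

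For the second part, we bound $E(L)_{\tors}$ by reduction. Since $L/K$ is ramified only above $3$ and the primes dividing $n$, and $E$ has bad reduction only at $2$ and $3$, we choose two auxiliary primes $\mathfrak q$ of $L$ of good reduction, not above $2,3$, or $n$, and use the injectivity of prime-to-residue-characteristic torsion into $\tilde E(\F_{\mathfrak q})$. A cleaner route uses the CM structure. $E(L)_{\tors}$ is an $\OO_K$-module, and for any prime $\ell$ the $\ell$-primary part is a finite $\OO_K$-module. If $E[\pi]\subset E(L)$ for a prime $\pi$ of $\OO_K$, then $K(E[\pi])\subseteq L$. For $\pi\nmid 6$, the group $\Gal(K(E[\pi])/K)$ is, up to the $\OO_K^\times$-twist, of order roughly $N\pi-1$ divided by $6$. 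This extension is ramified at $\pi$, which is not among the primes ramified in $L/K$ unless $\pi\mid n$. Such an inclusion is impossible, because $[L:K]=3$ and $K(E[\pi])/K$ is totally ramified at $\pi$ of degree $\ge (N\pi-1)/6$. The same reasoning handles $\pi\mid n$: there $L/K$ is tamely ramified of degree $3$ at $\pi$, which is compatible only if $(N\pi-1)/6$ divides $3$ with matching ramification. This case needs to be excluded using that $E$ has good reduction at $\pi$. Reduction mod $\pi$ is then injective on prime-to-$\pi$ torsion, and the inertia action on $\pi$-power torsion is unramified (Néron–Ogg–Shafarevich) on the prime-to-$p$ part, while for $\pi$-torsion we use the formal group bound $e<p-1$ on ramification. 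So the only possible torsion lies above $2$ and $3$.

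For $\ell=2,3$ we argue with the field $L$ directly. Any point of order $4$ or $9$, or a larger $3$-part than $\Z/3$ (for example full $E[3]$, or $E[\sqrt{-3}^2]$), would generate a subfield of $L$ over $K$. The only cubic subfield of $L/K$ is $L$ itself, so $L=K(\sqrt[3]{n})$ would have to equal the field generated by that torsion. We compute explicitly: $E[3]$ generates $K(\sqrt[3]{2})$ or $K(\sqrt[3]{4})$ via $x^3=-4$ for the remaining $3$-torsion. The halving points of $2$-torsion generate $K(\sqrt[3]{2})$-type or quadratic extensions. Since $n\neq 1,2,4$ is cube-free, $K(\sqrt[3]{n})\neq K(\sqrt[3]{2})$ by Kummer theory, so no extra torsion appears. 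The main obstacle is carrying out these $2$- and $3$-division computations cleanly, that is, showing that every extension of $K$ generated by new $2^\infty$- or $3^\infty$-torsion is either of degree prime to $3$ or equal to $K(\sqrt[3]{2})$ or $K(\sqrt[3]{4})$. This is precisely where the hypothesis $n\ne 2,4$ enters.
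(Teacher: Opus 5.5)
There is a genuine gap. Your check that the twelve points form a subgroup $\Z/6\Z\times\Z/2\Z$ of $E(K)$ is fine, and so is your computation of $E(L)[3]$ from $x^3=-4$. But the upper bound on $E(L)_{\tors}$ is never proved. You do not rule out a point $Q\in E(L)$ with $3Q=(0,\pm1)$, nor points of order $4$; you call exactly these computations ``the main obstacle'' and stop. Your assertion that halving $2$-torsion can give ``$K(\sqrt[3]{2})$-type'' extensions is also incorrect.

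The paper handles order $9$ by noting that $x(Q)$ would be a root of $\phi_3(x)=x^9-96x^6+48x^3+64$, which is irreducible of degree $9$ over $K$. It handles all $\ell\neq 3$, including $\ell=2$, by the twisting decomposition $3P=Q_0+Q_1+Q_2$ with $Q_i\in E(L)^{\tau=\zeta^i}\simeq E_i^{(n)}(K)$. This reduces to the known prime-to-$3$ torsion of $E,E_1^{(n)},E_2^{(n)}$ over $K$.

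Within your own framework both gaps close quickly:
\begin{itemize}
\item Since $E[2]\subseteq E(K)$, $\Gal(K(E[2^\infty])/K)$ lies in the pro-$2$ group $\ker(\gl_2(\Z_2)\to\gl_2(\F_2))$. Hence $K(E[2^\infty])\cap L=K$.
\item We have $(1-\zeta)^2=-3\zeta$, and $[\zeta]$ fixes $(0,\pm1)$. So such a $Q$ would give $(1-\zeta)Q\in E(L)[3]$ with $(1-\zeta)\big((1-\zeta)Q\big)=(0,\mp1)\neq\mathcal O$. That is a $3$-torsion point outside $E[1-\zeta]=\{\mathcal O,(0,\pm1)\}$, contradicting what you proved.
\end{itemize}

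Your treatment of $\ell\ge5$ also has a real flaw. The number $(N\pi-1)/6$ is the degree of the ray class field of $K$ modulo $\pi$, not the degree of $K(E[\pi])$. For $N\pi=7$ it equals $1$, so your bound gives neither the ramification at $\pi$ you assert nor a contradiction. For $N\pi=19$ with $\pi\mid n$ it leaves degree $3$ open. Your paragraph on $\pi\mid n$ (N\'eron--Ogg--Shafarevich on ``the prime-to-$p$ part'') does not resolve this.

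The input you need is that for $\pi\nmid 6$ (good reduction, Lubin--Tate theory of the formal group at $\pi$) one has $\Gal(K(E[\pi])/K)\simeq(\OO_K/\pi)^\times$. Then $[K(E[\pi]):K]=N\pi-1\ge 6>[L:K]$. Combine this with the fact that a nonzero $\ell$-primary part of the $\OO_K$-module $E(L)_{\tors}$ contains some $E[\pi]$ with $\pi\mid\ell$. This handles every $\ell\ge5$ at once, with no separate case $\pi\mid n$.
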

\begin{proof}
 We prove that for every prime $p$, $E(L)_{\tors}[p^\infty] = E(K)_{\tors}[p^\infty]$.\\
  Let $f_0:E(K) \xrightarrow{\sim} E(L)^{\tau = 1} \hookrightarrow E(L) $ be the natural embedding.
 For $i=1,2$, by general theory of twisting we have isomorphisms $\phi_i: E(L)^{\tau = \zeta^i} := \{ X \in E(L) \mid  X^\tau= \zeta^i X \} \xrightarrow{\sim} E_1^{(n^i)}(K) \simeq E_i^{(n)}(K)$, and hence we obtain maps $f_i:= \phi_i^{-1}: E_i^{(n)}(K) \xrightarrow{\sim} E(L)^{\tau = \zeta^i} \hookrightarrow E(L)$. Now define
    $f : E_0(K) \oplus E_1^{(n)}(K) \oplus E_2^{(n)}(K) \to E(L)$ by
    $f= f_0 + f_1 + f_2.$
    Note that for $P \in E(L)$, $Q_0=P+P^\tau+ P^{\tau^2} \in E(L)^{\tau =1} = f_0(E(K))$, $Q_1=P+\zeta^2 P^\tau+ \zeta P^{\tau^2} \in E(L)^{\tau =\zeta} = f_1(E_1^{(n)}(K))$ and $Q_2=P+ \zeta P^\tau+ \zeta^2 P^{\tau^2} \in E(L)^{\tau =\zeta^2} = f_2(E_2^{(n)}(K))$ and hence $Q_0+Q_1+Q_2=3P  \in \mathrm{Image}(f)$.
    Consequently, for any prime $p \neq 3$, the map
    $f: E_0(K)[p^\infty] \oplus E_1^{(n)}(K)[p^\infty] \oplus E_2^{(n)}(K)[p^\infty] \to E(L)[p^\infty]$
    is surjective. Since $n \neq 1,2,4$ is a cube-free integer, it is well known that for any prime $p \neq 3$, $E_1^{(n)}(\Q)_{\tors}[p^\infty] \simeq E_2^{(n)}(\Q)_{\tors}[p^\infty] = 0$, and hence by \cite[Cor. 4 and Thm. 5(ii)]{gjt} $E_1^{(n)}(K)_{\tors}[p^\infty] \simeq E_2^{(n)}(K)_{\tors}[p^\infty] = 0$, we obtain $E(L)[p^\infty] = E(K)[p^\infty]$.\\
To conclude the result, we will show that $E(L)[3^\infty]= E(K)[3^\infty] = \{ \mathcal{O}, (0, \pm 1) \} \simeq \Z/3\Z$.  The $3$-division polynomial of $E$ is $\psi_3(x) = 3x(x^3+4)$ \cite [Chap. III, Ex. 3.7]{silverman} and hence every $Q' = (x_{Q'}, y_{Q'})\in E(L)[3]$ satisfies $\psi_3(x_{Q'})=0$. Consequently, $E(L)[3] = \{ \mathcal{O}, (0, \pm 1) \} = E(K)[3]$. Now suppose that $Q\in E(L)$ has order equal to $3^t$ for some $t\geq2$. Then $Q^{''}:=3^{t-2}Q'=(x'',y'')$ satisfies that $3Q^{''}\in\{(0,\pm1)\}$. As $3Q^{''} = \left( \frac{\phi_3(Q^{''})}{\psi_3^2(Q^{''})}, \frac{\omega_3(Q^{''})}{\psi_3^3(Q^{''})}\right)$ \cite[Chap. III, Ex. $3.7$]{silverman}, we see that $x''$ is a root of $\phi_3(x)= x^9 - 96x^6+48x^3+64$ which is an irreducible polynomial of degree $9$ over $K$, hence $x'' \notin L$, and consequently, $E(L)[3^\infty] = E(L)[3]$.\\
The explicit structure for $E(K)_{\tors}$ is well known. \qedhere
\end{proof}
\subsection{Preliminaries on  modular curve and modular automorphisms} We recall some basic facts about modular curves (cf. \cite{cube
sum}).
 Let $\mathcal{H}=\{z\in \C \mid  \mathrm{Im}(z) >0 \}$ and $U_0(36):=\left\{\begin{pmatrix}
    a&b\\c&d
\end{pmatrix} \in \gl_2(\widehat{\Z})\mid c\equiv 0 \mod 36 \right\}$ be the open subgroup of $\mathrm{GL_2(\widehat\Z)}$ and define  $\Gamma_0(36):=\mathrm{GL_2}^+(\Q)\\ \cap U_0(36)$. Recall  $X_0(36)$ is the modular curve over $\Q$ of level $\Gamma_0(36)$ and can be expressed as  the double coset space 
\[
\begin{aligned}
    X_0(36)(\C)&=\mathrm{GL_2}^+(\Q) \backslash (\mathcal{H} \sqcup \mathbb{P}^1(\Q)) \times \mathrm{GL_2(\mathbb{A}}_f) / U_0(36)\\
&\simeq \Gamma_0(36) \backslash \mathcal{H} \bigsqcup \Gamma_0(36)\backslash\mathbb{P}^1(\Q),
\end{aligned}
 \]
where $\mathbb{A}_f$ denotes the finite adele ring of $\Q$. For a subgroup $H$ of a group $G$, define $N_G(H)$ to be the normalizer of $H$ in $G$. %\Gamma_0(36)$ in $\mathrm{GL_2}^+(\Q)$. 
Given an algebraic curve $X/\Q$ and a field extension $F$ over $\Q$, we denote by $\mathrm{Aut}_F(X)$ the group of algebraic automorphisms of $X$ defined over $F$. The action of $N_{\mathrm{GL_2}^+(\Q)}(\Gamma_0(36))$ on $X_0(36)$, induced from fractional linear transformation, gives us an injective homomorphism \cite{cube sum} 
\begin{equation}\label{Psi}
\Psi: N_{\mathrm{GL_2}^+(\Q)}(\Gamma_0(36))/ \Q^\times\Gamma_0(36) \hookrightarrow  \mathrm{Aut}_{\C}(X_0(36)) \simeq \OO_K^\times \rtimes X_0(36)(\C).
\end{equation}
By \cite{akbas-singerman}, \cite{atkin-lehner}, the quotient group $N_{\mathrm{GL_2}^+(\Q)}/ \Q^\times\Gamma_0(36) \simeq S_3 \times A_4$ is a solvable group of order $72$, generated by the Atkin-Lehner involutions $w_4:=\begin{pmatrix}
    4 &-1\\
    36 &-8
\end{pmatrix}$ and $w_9:=\begin{pmatrix}
    9 & 2\\36 &9
\end{pmatrix}$, along with the {\it exotic automorphism}   $e:=\begin{pmatrix}
    1 &0\\
    6&1
\end{pmatrix}$ (see \cite{survey article}).\\
The modular curve $X_0(36)$ has genus $1$  and  
the elliptic curve $(X_0(36), [\infty])$ is isomorphic to $E:y^2=x^3+1$ over $\Q$. Furthermore, by \cite{elkies}, an explicit isomorphism can be  described using the Dedekind $\eta$-functions:
\begin{equation}\label{modular parametrization}
\begin{aligned}
    f': X_0(36) &\xrightarrow{\sim} E: y^2=x^3+1,\\
    \lambda &\mapsto (x(\lambda), y(\lambda))=\left(\frac{\eta(12\lambda)\eta^3(18\lambda)}{\eta(6\lambda)\eta^3(36\lambda)}, \frac{\eta^2(18\lambda)\eta^4(12\lambda)}{\eta^2(6\lambda)\eta^4(36\lambda)}\right).
\end{aligned}
\end{equation}
The modular curve $X_0(36)$ has a set $\mathcal{C}$ of $12$ inequivalent cusps and we can identify $f'(\mathcal{C})$ with $E(K)$ \cite{cube sum}.
Using SageMath, we numerically approximate the images of the cusps under $f'$ and have the following table:
\begin{table}[ht!]
\centering
\resizebox{\textwidth}{!}{
\begin{tabular}{|c|c|c|c|c|c|}
  \hline
   $f'(\infty) = \mathcal{O}$  & $f'(0) = (2,3)$ & $f'(\frac{1}{2}) = (2, -3)$   &
   $f'(\frac{1}{4}) = (-1,0)$  & $f'(\frac{1}{9}) = (0, -1)$ & $f'(\frac{1}{18}) = (0,1)$\\
     \hline
   $f'(\frac{1}{3}) = (2 \zeta, 3)$ & $f'(\frac{2}{3}) = (2 \zeta^2, 3)$ & $f'(\frac{1}{6}) = (2 \zeta^2, -3)$ &
   $f'(\frac{5}{6}) = (2 \zeta, -3)$ & $f'(\frac{1}{12}) = (-\zeta, 0)$ & $f'(\frac{5}{12}) = (-\zeta^2, 0)$\\
     \hline
\end{tabular}}.
\caption{Images of the cusps under $f'$}
\label{cusp}
\end{table}

\noindent Let $U=\left\{ \begin{pmatrix}
    a&b\\c&d
\end{pmatrix} \in \gl_2(\widehat{\Z})\mid\, a\equiv d \mod3,\,c\equiv 0 \mod 36\right\}$ be the index $2$ subgroup of $U_0(36)$. The corresponding  modular curve $X_U$  over $\Q$ can be expressed as  the double coset space  $$X_U(\C):= \mathrm{GL_2}^+(\Q) \backslash (\mathcal{H} \sqcup \mathbb{P}^1(\Q)) \times \mathrm{GL_2(\A}_f) / U.$$ Using class field theory, we can  identify
$\Q_+\widehat{\Z}^\times/\Q_+^\times\det(U)$ with $\Gal(K/\Q)$ (cf. \cite{cube sum} , \cite{hu-shu-yin}). As $\gl_2^+(\Q) \cap U=\Gamma_0(36)$, the modular curve $X_U$ is isomorphic to $X_0(36)\times_\Q K$ over $\Q$ \cite[Chap. $6$]{shimura}. Set $$\epsilon:=\begin{pmatrix}
    1&0\\0&-1
\end{pmatrix},\; \text{then}\;U_0(36)/U=\la\epsilon\ra .$$ The Galois group $\Gal(K/\Q)$ has nontrivial action on $X_U$ through the right translation of $\epsilon$ on $X_U$. We have $\mathrm{Aut_\Q}(X_U)=\mathrm{Aut}_K(X_0(36))\rtimes\Gal(K/\Q)\simeq \left(X_0(36)(K)\rtimes \OO_K^\times\right)\rtimes \Gal(K/\Q)$.  
There is a canonical homomorphism $N_{\gl_2(\A_f)}(U)/U\longrightarrow \mathrm{Aut}_\Q(X_U)$ induced from the right translation operations on $X_U$, i.e., for $P=[z,g]_U\in X_U$ and $x\in N_{\gl_2(\A_f)}(U)$, $P\mapsto P^x=[z,gx]_U.$ The curve $X_U$ has two connected components over $\C$ and an element of $N_{\gl_2(\A_f)}(U)$ maps one connected component onto the other if and only if it yields the value $-1$ as its image under the sequential composition of the following morphisms \cite{cube sum}: $$\gl_2(\A_f)=\gl_2^+(\Q)U_0(36)\xrightarrow{\det}\Q_+^\times\widehat{\Z}^\times \twoheadrightarrow \widehat{\Z}^\times\xrightarrow{\text{mod}\, 3}\Z_3^\times/(1+3\Z_3).$$ 
\subsection{The modular actions on Heegner points}\label{subsection 2.2} Let $p\equiv 4,7 \mod 9$ be an integer prime. Let $\rho:K \hookrightarrow M_2(\Q)$ be the normalized embedding with a fixed point $h=\frac{p\zeta}{6}\in \mathcal{H}$, i.e, we have
$\rho(t)\begin{pmatrix}
    h\\1
\end{pmatrix}=t\begin{pmatrix}
    h\\1
\end{pmatrix}$ for any $t\in K.$
 We can write $h=A\zeta$, where $A=\begin{pmatrix}
    p &0\\0&6
\end{pmatrix}$. Then the embedding $\rho$ is explicitly given by\[\rho(\zeta)= A\begin{pmatrix}
    -1&-1\\1&0
\end{pmatrix}A^{-1}=\begin{pmatrix}
    -1&-\frac{p}{6}\\
    \frac{6}{p} & 0
\end{pmatrix}.\]
For an integer $c\geq 1$, let $\OO_c$ denote the order of $K$ of conductor $c$ and let $H_c$ be the ring class field of conductor $c$. Then $\widehat{K}^\times\cap U=\widehat{\OO_{6p}}^\times$. Let $w=\begin{pmatrix}
    1&-p/6\\
    0 & -1
\end{pmatrix}\in N_{\gl_2(\Q)}(K^\times)$, which normalizes $U$ and hence $w$ represents an element  in $ \mathrm{Aut}_\Q(X_U)$. Similarly, $w\epsilon$ normalizes $U$ and thus gives us an element of $\mathrm{Aut}_{\Q}(X_U)$. Let $\OO_{K,2}$ and $\OO_{K,3}$ be the completions of $\OO_K$ at the unique place above $2$ and $3$, respectively. We have $$
K_2^\times/\Q_2^\times(1+2\OO_{K,2})=\la\zeta_2\ra^{\Z/3\Z}\quad \text{and}\quad \OO_{K,3}^\times/\Z_3^\times(1+3\OO_{K,3})=\la\zeta_3\ra^{\Z/3\Z},
$$
where $\zeta_2$ and $\zeta_3$ are the images of $\zeta$ in $\OO_{K,2}$ and $\OO_{K,3}$, respectively. Considering the natural embeddings of $\zeta_2$ (respectively $\zeta_3$) in $\gl_2(\A_f)$, we note that  $\zeta_2$ (respectively $\zeta_3$) normalize $U$ in $\gl_2(\A_f)$ and hence we have the embeddings $$
K_2^\times/\Q_2^\times(1+2\OO_{K,2}) \hookrightarrow \mathrm{Aut}_\Q(X_U)\quad \text{and}\quad \OO_{K,3}^\times/\Z_3^\times(1+3\OO_{K,3}) \hookrightarrow \mathrm{Aut}_\Q(X_U).
$$

\begin{proposition}\label{actions on P} 
  For any point $P\in X_U$, we have $$
 P^{\zeta_2^{-1} }=P+(0,1), \quad  P^{\zeta_3^{-1}}=\zeta P+(-\zeta,0),                 \quad P^{\zeta_3}=\zeta^2P+(-1,0),  \quad P^{w\epsilon}=-\zeta^2 P.
    $$
\end{proposition}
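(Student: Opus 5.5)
Every automorphism of the genus one curve $X_U$ (over $\C$, on each component, identified with $E$ via $f'$) has the form $P\mapsto uP+T$ with $u\in\OO_K^\times$ and $T$ a point. We therefore determine each of the four maps by computing its linear part $u$ and its translation part $T$ separately. Each of $\zeta_2$, $\zeta_3^{\pm1}$ and $w\epsilon$ has finite order modulo $U$ (order $3$, $3$, $3$ and $2$ respectively). Each also commutes with the $\Gal(K/\Q)$-structure given by the right translation of $\epsilon$, up to the relations we can check. So the translation part $T$ must lie in $E(K)$ and satisfy the appropriate torsion constraint. Since $E(K)_{\tors}\simeq \Z/6\Z\times\Z/2\Z$ is known explicitly, and the cusps are identified with $E(K)$ by Table~\ref{cusp}, it suffices to track what each map does to a cusp and to the invariant differential.

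For the translation part, we evaluate the image of the cusp $\infty$ (i.e.\ $\mathcal O$). For an adelic element $x$ normalizing $U$, strong approximation lets us write $x=\gamma u$ with $\gamma\in\gl_2^+(\Q)$ (or $\gl_2(\Q)$) and $u\in U$. Then $[\infty,1]^x=[\gamma^{-1}\infty,1]$, which is a cusp $a/c$ that we read off from Table~\ref{cusp}. For instance, $\zeta_2$ is trivial away from $2$. So we choose $\gamma\in\gl_2(\Q)$ that is congruent to the matrix of $\zeta$ at $2$ (in the basis coming from the embedding into $M_2(\Z_2)$ compatible with $U$) and lies in $U_\ell$ for $\ell\neq2$. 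This amounts to a matrix of the form of a representative of the Atkin--Lehner-type element $w_4$ composed with $e$. The cusp obtained should be $1/18$, giving $T=(0,1)$. Similarly, $\zeta_3^{\mp1}$ corresponds to a $\gamma$ built from $w_9$ and the exotic $e=\begin{pmatrix}1&0\\6&1\end{pmatrix}$. These should send $\infty$ to $1/12$ and $1/4$, giving $(-\zeta,0)$ and $(-1,0)$. Finally, $w\epsilon=\begin{pmatrix}1&p/6\\0&1\end{pmatrix}$ fixes $\infty$, so its translation part is $\mathcal O$.

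For the linear part, we pull back the invariant differential $dx/2y$. Via $f'$ this is a constant multiple of the weight-two newform $f(\lambda)d\lambda$, where $f=\eta(6\lambda)^4$ has CM and $q$-expansion supported on exponents $\equiv1\bmod 6$. For $w\epsilon$, the action $\lambda\mapsto\lambda+p/6$ multiplies the $n$-th coefficient by $e^{2\pi i np/6}$. Since $n\equiv1\bmod6$ and $p\equiv1\bmod3$ is odd, this factor is $e^{2\pi i p/6}=-\zeta^{2}$, up to the precise residue of $p$ modulo $6$, which we check; the sign flip from $\epsilon$ then yields $-\zeta^2$ on points. For $\zeta_2^{-1}$, the map is a translation, so the linear part is $1$. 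This is consistent with $\zeta_2$ acting trivially on the differential, since $\zeta_2$ lies in the center-like torus at $2$, where $f$ has an eigenvalue we compute via $w_4$ and $e$. For $\zeta_3^{\pm1}$, the action of $e$ on $\eta(6\lambda)^4$ is computed from the $\eta$ transformation law; it should produce $\zeta^{\pm1}$ (resp.\ $\zeta^2$ for $\zeta_3$), and consistency requires that the square of the $\zeta_3^{-1}$ rule gives the $\zeta_3$ rule. That is, we need $\zeta(\zeta P+(-\zeta,0))+(-\zeta,0)=\zeta^2P+(-1,0)$, i.e.\ $(-\zeta^2,0)+(-\zeta,0)=(-1,0)$, which holds in $E[2]$. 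This also serves as a check on the choices.

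The main obstacle is the strong approximation step: finding explicit global matrices $\gamma$ representing $\zeta_2$ and $\zeta_3$ modulo $U$, compatible with the chosen embedding $\rho$ and the component structure of $X_U$. We also need to track carefully which component we land on, since the $\epsilon$-twist and $\det\bmod 3$ may force composing with the Galois action. We would handle this by first writing $\zeta_2$, $\zeta_3$ as explicit integer matrices in $\gl_2(\Z_2)$ and $\gl_2(\Z_3)$ normalizing $U$, and then matching them, via CRT, with products of $w_4$, $w_9$ and $e$ modulo $U$.
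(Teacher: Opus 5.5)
Your framework is the paper's: write $x=\gamma^{-1}u$ with $\gamma\in\gl_2^+(\Q)$ and $u\in U$, so that $P^x=[\gamma z,1]_U=\Psi(\gamma)P$ is an affine map $P\mapsto uP+T$, with $T=f'(\gamma\infty)$ read off Table~\ref{cusp}. However, the proposal never produces the matrices $\gamma$, and exhibiting them is essentially the whole proof. You defer this as ``the main obstacle'' and only guess its shape (``$w_4$ composed with $e$''). As a result, the cusps $1/18$, $1/12$, $1/4$ are asserted with ``should be'', that is, read back from the statement rather than derived. The paper simply writes down $\begin{pmatrix}1&1/2\\18&10\end{pmatrix}$, $\begin{pmatrix}1&-1/3\\12&-3\end{pmatrix}$, $\begin{pmatrix}3&-1/3\\12&-1\end{pmatrix}$, $\begin{pmatrix}1&p/6\\0&1\end{pmatrix}$, with $\gamma x\in U$ for $x=\zeta_2^{-1},\zeta_3^{-1},\zeta_3,w\epsilon$ respectively. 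Checking this is a place-by-place computation. For example, the first matrix has determinant $1$, lies in $U_\ell$ for every odd $\ell$, and $\gamma\rho(\zeta)^{-1}=\begin{pmatrix}-3/p&(p-3)/6\\-60/p&3p-10\end{pmatrix}\in U_2$. Once these matrices are in hand, no strong-approximation search and no factorisation into $w_4,w_9,e$ is needed.

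The linear parts are where the proposal is not just incomplete but unsupported. For $\zeta_2^{-1}$ you assume the map is a translation and then call that ``consistent''. For $\zeta_3^{\pm1}$ the eigenvalue of $\eta(6\lambda)^4$ is only asserted. What you would actually need is $\phi|_2\gamma$ for the relevant $\gamma$, which you have not identified as a word in $w_4,w_9,e$, and computing it means handling eta multipliers. Your check $(-\zeta^2,0)+(-\zeta,0)=(-1,0)$ only shows that the $\zeta_3$ rule follows from the $\zeta_3^{-1}$ rule (since $\zeta_3^{-2}=\zeta_3$); it determines neither.

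The differential idea itself is legitimate: $S_2(\Gamma_0(36))=\C\,\eta(6\lambda)^4$, so $\phi|_2\gamma=u\phi$. It is also neat for $w\epsilon$. But the paper's route is much cheaper: evaluate $\Psi(\gamma)$ at a second cusp, namely $0$. The six $\OO_K^\times$-multiples of $f'(0)=(2,3)$ are distinct, so this forces $u$. For the four $\gamma$'s, $\gamma 0$ equals $1/20\sim1/4$, $1/9$, $1/3$ and $p/6\sim1/6$ respectively. These give $(-1,0)=(2,3)+(0,1)$, $(0,-1)=\zeta(2,3)+(-\zeta,0)$, $(2\zeta,3)=\zeta^2(2,3)+(-1,0)$ and $(2\zeta^2,-3)=-\zeta^2(2,3)$.

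Two smaller points. First, $w\epsilon$ has order $6$ modulo $U$, not $2$, as it must since $-\zeta^2$ has order $6$. Second, there is no extra ``sign flip from $\epsilon$''. What does matter is the direction of the translation: $\lambda\mapsto\lambda+p/6$ gives the factor $e^{\pi i/3}=-\zeta^2$, while $\lambda\mapsto\lambda-p/6$ gives $-\zeta$, and these two maps are not congruent modulo $\Gamma_0(36)$. Your own convention $[\infty,1]^x=[\gamma^{-1}\infty,1]$, applied literally to $x=\begin{pmatrix}1&p/6\\0&1\end{pmatrix}$, produces the second, wrong map. The stated answer corresponds instead to taking $w=\begin{pmatrix}1&p/6\\0&-1\end{pmatrix}$. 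Unlike the sign-reversed matrix, this one satisfies $w\rho(\zeta)w^{-1}=\rho(\bar\zeta)$. With it, $\gamma=(w\epsilon)^{-1}=\begin{pmatrix}1&p/6\\0&1\end{pmatrix}$, as in the paper. The resulting map $h\mapsto h+p/6=-\bar h$ is consistent with $\overline{f'(h)}=f'(-\bar h)$.
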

\begin{proof}
    Since $\zeta_2^{-1},\, \zeta_3^{-1},\, \zeta_3, \, w\epsilon$ all have determinant $\equiv 1\mod 3$, when identified as elements in $\mathrm{Aut}_\Q(X_U)$, they lie in the subgroup $\mathrm{Aut}_K(X_0(36))$. Let $P=[z,1]_U$ be a point on $X_U$. Note that $$
    \begin{pmatrix}
        1 &1/2\\
        18 & 10
    \end{pmatrix}\zeta_2^{-1}\in U, \quad \begin{pmatrix}
    1& -1/3\\
    12&-3
\end{pmatrix}\zeta_3^{-1}\in U, \quad \begin{pmatrix}
    3&-1/3\\
    12& -1
\end{pmatrix}\zeta_3\in U, \quad \begin{pmatrix}
    1 & p/6\\
    0 & 1
\end{pmatrix}w\epsilon \in U.
    $$
It is easy to see  the embedding in (\ref{Psi}) in fact gives the embedding $\Psi: N_{\mathrm{GL_2}^+(\Q)}(\Gamma_0(36))/ \Q^\times\Gamma_0(36) \\ \hookrightarrow \OO_K^\times \rtimes \mathcal{C}$. Then $P^{\zeta_2^{-1}}=[z,\zeta_2^{-1}]_U=\left[ \begin{pmatrix}
        1 &1/2\\
        18 & 10
    \end{pmatrix}z,1\right]_U=\Psi \begin{pmatrix}
        1 &1/2\\
        18 & 10
    \end{pmatrix}P.$ \\
Taking $P=[\infty]$ and $P=[0]$ in turn, it follows from Table \ref{cusp} that $\Psi \begin{pmatrix}
        1 &1/2\\
        18 & 10
    \end{pmatrix}P=P+(0,1)$. The other identities can be obtained in a similar manner. \qedhere
\end{proof}

Let $\sigma: \widehat{K}^\times \rightarrow \Gal(K^{\mathrm{ab}}/K)$ be the Artin reciprocity map and put  $\sigma_t: =\sigma(t)$ for $t\in \widehat{K}^\times$. Recall $h =\frac{p\zeta}{6}$ and let $P_0=[h,1]$ be the CM point on $X_0(36)$. Also recall $f'$ denotes the identity morphism from $X_0(36)$ to $E$ as in (\ref{modular parametrization}) and we denote $P_1:=f'(P_0)$.

\begin{corollary}{\label{action on P_0}}
    The point $P_1\in E(H_{6p})$ satisfies $$
 P_1^{\sigma_{\zeta_2}^{-1} }=P_1+(0,1), \quad  P_1^{\sigma_{\zeta_3}^{-1}}=\zeta P_1+(-\zeta,0), \quad P_1^{\sigma_{\zeta_3}}=\zeta^2P_1+(-1,0),  \quad P_1^{\sigma_{w\epsilon}}=-\zeta^2 P_1=\overline{P_1}.$$
\end{corollary}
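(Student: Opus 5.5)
The plan is to deduce the corollary from Proposition \ref{actions on P} by combining it with Shimura's reciprocity law for the CM point $P_0=[h,1]_U$. First I will record that $P_0$ is defined over $H_{6p}$. Since $\widehat{K}^\times\cap U=\widehat{\OO_{6p}}^\times$, the stabilizer of $P_0$ under the reciprocity action contains $K^\times\widehat{\OO_{6p}}^\times$. Hence $P_0\in X_U(H_{6p})$, and because $f'$ is defined over $\Q$, also $P_1=f'(P_0)\in E(H_{6p})$.

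Next I will apply Shimura reciprocity: for $t\in\widehat{K}^\times$,
\[
[h,1]_U^{\sigma_t}=[h,\rho(t)^{-1}]_U ,
\]
with the normalization and sign convention of \cite{cube sum} and \cite{shimura}. I will take $t=\zeta_2$ and $t=\zeta_3^{\pm1}$, viewed in $\widehat{K}^\times$ and embedded through $\rho$ at the places $2$ and $3$. This gives
\[
P_0^{\sigma_{\zeta_2}^{-1}}=[h,\rho(\zeta_2)]_U ,
\]
and similarly for $\zeta_3$. The first three identities are then exactly the right-translation identities of Proposition \ref{actions on P}, applied to $P=P_0$ and transported to $E$ by $f'$. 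Here I have to make sure that the matrices $\zeta_2,\zeta_3$ used in Proposition \ref{actions on P} are precisely $\rho(\zeta)$ at $2$ and $3$, so that the same $U$-cosets appear. This requires the entries $p/6$ and $6/p$ of $\rho(\zeta)$ to be local units (or integral) at $2$ and $3$, which is where $p\ne 2,3$ is used.

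For the last identity I will compute $P_0^{w\epsilon}$ directly, rather than via $t\in\widehat K^\times$. On points, $w$ acts on $\mathcal H$ by $z\mapsto (z-p/6)/(-1)=p/6-z$. So $w$ sends $h=p\zeta/6$ to
\[
\frac{p}{6}(1-\zeta)=\frac{p}{6}\bigl(-\zeta^2-2\zeta\bigr),
\]
and one checks that, up to the $\Gamma_0(36)$ action, this point relates to $\bar h$. The element $w\epsilon$ lies in $N_{\gl_2(\Q)}(K^\times)$ with determinant $+1$ after the twist by $\epsilon$, and it conjugates $\rho(t)$ to $\rho(\bar t)$. By the standard property of Shimura reciprocity, translation by such an element realizes complex conjugation on CM points, so $P_0^{w\epsilon}=\overline{P_0}$. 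This justifies the notation $\sigma_{w\epsilon}$ and the last equality $P_1^{\sigma_{w\epsilon}}=\overline{P_1}$. Combined with the geometric identity $P^{w\epsilon}=-\zeta^2P$ from Proposition \ref{actions on P}, it gives both expressions.

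The main obstacle is the bookkeeping of conventions. One must get the sign of the exponent in Shimura reciprocity right ($\sigma_t$ versus $\sigma_t^{-1}$ corresponding to $\rho(t)^{-1}$). One must also verify that complex conjugation on $X_U$ corresponds to $\epsilon$, as stated in the description of $\Gal(K/\Q)$ acting on $X_U$. I would fix these by checking the case $t=\zeta_3$ against $\zeta_3^{-1}$ for consistency: the two formulas must compose to the identity, since $\zeta(\zeta^2P+(-1,0))+(-\zeta,0)=P+\zeta(-1,0)+(-\zeta,0)=P$.
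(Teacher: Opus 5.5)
Your strategy (Shimura reciprocity for $P_0=[h,1]_U$, then read the answer off Proposition~\ref{actions on P}) is the paper's. However, the normalization you commit to breaks it. The paper uses $P_0^{\sigma_t}=[h,\rho(t)]_U$, so $P_0^{\sigma_{\zeta_2}^{-1}}=P_0^{\sigma_{\zeta_2^{-1}}}=[h,\zeta_2^{-1}]_U=P_0^{\zeta_2^{-1}}$, which is literally what Proposition~\ref{actions on P} computes. You write $[h,1]_U^{\sigma_t}=[h,\rho(t)^{-1}]_U$ and correctly deduce $P_0^{\sigma_{\zeta_2}^{-1}}=[h,\rho(\zeta_2)]_U=P_0^{\zeta_2}$. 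You then claim this is ``exactly'' the identity of Proposition~\ref{actions on P}, and it is not. That proposition describes right translation by $\zeta_2^{-1}$, and since right translation is a group action, $\zeta_2$ acts by the inverse map $P\mapsto P+(0,-1)$. Carried out honestly, your argument gives $P_1^{\sigma_{\zeta_2}^{-1}}=P_1+(0,-1)$, $P_1^{\sigma_{\zeta_3}^{-1}}=\zeta^2P_1+(-1,0)$ and $P_1^{\sigma_{\zeta_3}}=\zeta P_1+(-\zeta,0)$, i.e.\ the inverses of the stated identities. This is not a harmless relabelling. The identity $T^{\sigma_{\zeta_2}^{-1}}=T+(0,1)$ follows from the Hilbert-symbol computation of Proposition~\ref{field} without Shimura reciprocity, so with your signs $Q=P_1-T$ would not be $\sigma_{\zeta_2}$-invariant. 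Your sanity check cannot catch this: the two $\zeta_3^{\pm1}$ formulas are mutually inverse, so they compose to the identity under either convention. The sign must come from the reciprocity law in the normalization matching the Artin map already used in Proposition~\ref{field} (that of \cite{cube sum}). A meaningful independent test would compare with a Galois action computed algebraically, e.g.\ on the explicit CM point $P'=f'(\zeta/6)\in E(H_6)$ of Lemma~\ref{P'}.

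There are two further inaccuracies.

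First, $p/6$ is not integral at $2$ or $3$: $v_2(p/6)=v_3(p/6)=-1$ for every $p\neq 2,3$. This is why the auxiliary matrices in Proposition~\ref{actions on P} contain $1/2$ and $-1/3$. All that is used is that $\rho(\zeta_2)$ and $\rho(\zeta_3)$ normalize $U$.

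Second, for the last identity, $w\epsilon$ is a unipotent translation matrix and does not normalize $\rho(K^\times)$. The element of $N_{\gl_2(\Q)}(K^\times)$ conjugating $\rho(t)$ to $\rho(\bar t)$ is $w$, and it must satisfy $w(h)=\bar h$. Your computation $w(h)=\frac{p}{6}(1-\zeta)$ shows this fails for the matrix as printed. The ``up to $\Gamma_0(36)$'' step cannot rescue it, since $\overline{w(h)}=h+\frac{p}{3}$ is a CM point of conductor $2p$, not $6p$. With upper-right entry $+p/6$ instead, one gets $w(h)=-h-\frac{p}{6}=\bar h$, $w^2=1$ and $\begin{pmatrix}1&p/6\\0&1\end{pmatrix}w\epsilon=1$, consistent with the proof of Proposition~\ref{actions on P}. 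The argument then becomes concrete. In the $\mathcal{H}^{\pm}$-description complex conjugation is $[z,g]\mapsto[\bar z,g]$, so $\overline{P_0}=[\bar h,1]=[wh,1]=[h,w^{-1}]=[h,w\epsilon\cdot\epsilon]$. Right translation by $\epsilon\in U_0(36)$ disappears on $X_0(36)$, hence $\overline{P_1}=f'(P_0^{w\epsilon})=-\zeta^2P_1$ by Proposition~\ref{actions on P}.
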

\begin{proof}
    By Shimura's reciprocity law \cite[Thm. $6.31$ and Thm. $6.38$]{shimura}, $P_0^{\sigma_t}=[h, \rho(t)]$ for any $t\in \widehat{K}^\times$.
Since $\widehat{K}^\times\cap \,U=\widehat{\OO_{6p}}^\times$, $P_0$ is defined over the ring class field $H_{6p}$. The Galois actions follow from Proposition \ref{actions on P}. \qedhere
\end{proof}
From now on, we take $L:=K(\sqrt[3]{p})$. 
\begin{proposition}{\label{field}}
We have $H_{6p}=H_{3p}(\sqrt[3]{2})$ with $\Gal(H_{6p}/H_{3p})=\la\sigma_{\zeta_2}^{-1}\ra\simeq \Z/3\Z$ and $\left(\sqrt[3]{2}\right)^{\sigma_{\zeta_2}^{-1}-1}=\zeta.$ Also $L \subset H_{3p}$, $[H_{3p}:L]=\frac{p-1}{3}$ and $\Gal(L/K)=\la{\sigma_{\zeta_3}}\ra \simeq \Z/3\Z$ with $\left(\sqrt[3]{2}\right)^{\sigma_{\zeta_3}-1}=\zeta$. Moreover,

\begin{enumerate}[label=\arabic*.]
 
\item For $p\equiv4\mod 9$, we have $\left(\sqrt[3]{p}\right)^{\sigma_{\zeta_3}^{-1}-1}=\zeta$.
\item For $p\equiv7\mod 9$, we have $\left(\sqrt[3]{p}\right)^{\sigma_{\zeta_3}-1}=\zeta$.
\end{enumerate}
\end{proposition}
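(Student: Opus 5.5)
We will use class field theory: identify Galois groups of ring class fields with idele quotients via the Artin map $\sigma$, and compute the action of $\sigma_t$ on Kummer generators $\sqrt[3]{m}$ with cubic Hilbert (norm residue) symbols.

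\textbf{Degrees and inclusions.} Recall $\Gal(H_c/K)\simeq \widehat{K}^\times/K^\times\widehat{\OO_c}^\times$, so $[H_c:K]=h(\OO_c)$. Since $h(\OO_K)=1$ and $|\OO_K^\times|=6$, the class number formula for orders gives
$$h(\OO_c)=\frac{c}{3}\prod_{\ell\mid c}\Big(1-\Big(\tfrac{-3}{\ell}\Big)\tfrac1\ell\Big)\qquad (c>1).$$
For $p\equiv 1\bmod 3$ this yields $h(\OO_{3p})=p-1$ and $h(\OO_{6p})=3(p-1)$; here $2$ is inert in $K$, contributing the factor $2(1+\tfrac12)=3$. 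Hence $[H_{6p}:H_{3p}]=3$. The element $\zeta_2$, placed at the $2$-component, lies in $\widehat{\OO_{3p}}^\times$ but not in $K^\times\widehat{\OO_{6p}}^\times$, because $\zeta\notin\Z_2+2\OO_{K,2}$. Therefore $\sigma_{\zeta_2}$ generates $\Gal(H_{6p}/H_{3p})$.

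\textbf{Identifying the Kummer generators.} A cubic extension $K(\sqrt[3]{m})/K$ lies in $H_c$ exactly when its norm group contains $K^\times\widehat{\OO_c}^\times$. For $\sqrt[3]{2}$, $\sqrt[3]{p}$ and $\sqrt[3]{3}$ this can be checked via local conductors:
\begin{enumerate}[label=(\roman*)]
\item Every $\Q$-rational $\sqrt[3]{m}$ gives an anticyclotomic extension, i.e.\ a ring class field extension. This can be seen directly: complex conjugation acts by inversion on $\Gal(K(\sqrt[3]{m})/K)$.
\item The conductor of $K(\sqrt[3]{p})/K$ divides $3p$, since $p\equiv 1\bmod 3$ is tamely ramified and $3$ is wildly ramified. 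So $L\subset H_{3p}$, and $[H_{3p}:L]=(p-1)/3$.
\item $K(\sqrt[3]{2})$ is ramified at $2$ and has conductor dividing $6$, hence lies in $H_{6}\subset H_{6p}$. It is not contained in $H_{3p}$, since it is ramified at $2$ while $H_{3p}/K$ is unramified at $2$.
\end{enumerate}
Comparing degrees gives $H_{6p}=H_{3p}(\sqrt[3]{2})$. Finally, $\Gal(L/K)$ is generated by $\sigma_{\zeta_3}$ once we show $\sigma_{\zeta_3}$ acts nontrivially on $\sqrt[3]{p}$; the computation below does this.

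\textbf{Computing the actions.} For an idele $t$ and $m\in\Q^\times$ we have $(\sqrt[3]{m})^{\sigma_t-1}=\prod_v (t_v,m)_v$, a product of cubic Hilbert symbols, up to the paper's normalization of the reciprocity map. If $t=\zeta_v$ is concentrated at a single place $v$, only the symbol $(\zeta,m)_v$ survives. We treat the three cases in turn.
\begin{enumerate}[label=(\alph*)]
\item $(\zeta,2)_2$. Here $\zeta$ is a unit and $2$ is a uniformizer of the unramified extension $K_2/\Q_2$, so the tame symbol gives $(\zeta,2)_2=\zeta^{\pm(4-1)/3}=\zeta^{\pm1}$. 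This is a nontrivial cube root of unity, with the sign fixed by convention.
\item $(\zeta,m)_3$ for $m=2$ and $m=p$. At $3$ the symbol is wild. We use the explicit formula for the cubic Hilbert symbol on $\Q_3(\zeta)$ (e.g.\ Artin--Hasse): for $m\equiv 1\bmod 3$ one has $(\zeta,m)_3=\zeta^{\pm(m-1)/3}$. Since $2\equiv -1\bmod 3$ and $-1$ is a cube, we may replace $2$ by $-2\equiv 1\bmod 3$ and get exponent $(-2-1)/3=-1$. For $p$, $(p-1)/3\equiv 1 \bmod 3$ when $p\equiv 4\bmod 9$ and $\equiv 2\bmod 3$ when $p\equiv 7\bmod 9$. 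So the actions of $\sigma_{\zeta_3}$ on $\sqrt[3]{p}$ in the two cases are inverse to each other, matching items 1 and 2.
\item Consistency of conventions. Because $\zeta_2^{-1}$ and $\zeta_3$ appear in the statement with specific signs, we fix the sign convention with a single check: $\sigma_{\zeta_3}$ must act on $\sqrt[3]{2}$ and $\sqrt[3]{p}$ compatibly with the formula $\zeta^{(m-1)/3}$ applied to $m=-2$, $m=p$. This makes the claimed exponents consistent simultaneously.
\end{enumerate}

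The main obstacle is step (b) together with the sign conventions: getting the wild cubic Hilbert symbol at $3$ exactly right, including the choice of Artin map normalization. As a safeguard, we would cross-check using Corollary~\ref{action on P_0} and the product formula: the global symbol $\prod_v(\zeta,m)_v=1$. For $m=p$ this gives $(\zeta,p)_3=(\zeta,p)_p^{-1}=\big(\zeta^{(p-1)/3}\big)^{\mp1}$ via the tame symbol at the primes above $p$. This independently confirms the dependence on $p\bmod 9$, avoiding the wild computation.
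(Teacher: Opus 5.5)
\textbf{Verdict: there is a genuine gap in the sign determination, which is the actual content of the proposition.}

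Your framework is the paper's: class field theory plus cubic Hilbert symbols. The structural claims are handled adequately, and you give more detail than the paper on $[H_{6p}:H_{3p}]=3$, $H_6=K(\sqrt[3]{2})$ and $L\subset H_{3p}$.

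The gap is in determining \emph{which} primitive cube root of unity occurs.
\begin{itemize}
\item You leave $(\zeta,2)_2=\zeta^{\pm1}$ and $(\zeta,m)_3=\zeta^{\pm(m-1)/3}$ with two independent, undetermined signs. Step (c) does not fix them; it only asserts that some choice is consistent.
\item The signs are not free. Pin the normalization by the first claim, $(\sqrt[3]{2})^{\sigma_{\zeta_2}^{-1}-1}=\zeta$. This means $(\zeta,2)_2=\zeta^{-(4-1)/3}=\zeta^2$, which is Neukirch's tame formula $(u,\pi)_v\equiv u^{-(Nv-1)/3}$, the one the paper uses.
\item The product formula then forces $(\zeta,2)_3=(\zeta,2)_2^{-1}=\zeta$. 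So the $3$-adic formula must read $(\zeta,m)_3=\zeta^{-(m-1)/3}$, not $\zeta^{(m-1)/3}$.
\item Your literal prescription in (c), ``compatibly with $\zeta^{(m-1)/3}$'', gives $(\sqrt[3]{2})^{\sigma_{\zeta_3}-1}=\zeta^2$. For $p\equiv 4\bmod 9$ it gives $(\sqrt[3]{p})^{\sigma_{\zeta_3}^{-1}-1}=\zeta^2$. Both contradict the statement.
\item The relative consistency you observe between $m=-2$ and $m=p$ is real. But it fixes neither the absolute sign nor its link to the $2$-adic symbol.
\item That link is the fact that $\sigma_{\zeta_2}^{-1}$ and $\sigma_{\zeta_3}$ act identically on $\sqrt[3]{2}$. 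It comes from the product formula, not from separate local computations with independently chosen conventions.
\end{itemize}

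The repair is to make your ``safeguard'' the main argument, as the paper does. Fix the single tame normalization above. Then $(\zeta_3,2)_3=(\zeta_2,2)_2^{-1}=\zeta$, and
$(\zeta_3,p)_3=(\zeta_v,p)_v^{-1}(\zeta_{\overline v},p)_{\overline v}^{-1}=\zeta^{2(p-1)/3}=\zeta^{-(p-1)/3}$.
No wild (Artin--Hasse) computation is needed.

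Both split places $v,\overline v\mid p$ must be counted. Each contributes $\zeta^{-(p-1)/3}$, and it is the inverse of their product that gives the answer. Your expression $(\zeta,p)_p^{-1}=(\zeta^{(p-1)/3})^{\mp1}$ is ambiguous here; a single factor would give the inverse of the correct value.

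Two smaller points:
\begin{itemize}
\item In (ii), wild ramification at $3$ does not by itself bound the conductor exponent at $\sqrt{-3}$ by $2$. For example, $K(\sqrt[3]{\zeta})=\Q(\zeta_9)$ has exponent $3$. What you need is that $\pm m\in 1+3\Z_3$ for rational $m$ prime to $3$. This puts the Kummer generator in $1+3\OO_{K,3}$ and bounds the exponent by $2$.
\item To conclude $\zeta_2\notin K^\times\widehat{\OO_{6p}}^\times$, you must also rule out a global unit factor $\pm\zeta^j$. The $3$-component does this, since $\zeta\notin\Z_3+3\OO_{K,3}$.
\end{itemize}
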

\begin{proof}
    The Galois group, 
    $\Gal(H_{6p}/H_{3p}) \simeq K^\times\widehat{\OO_{3p}}^\times/K^\times\widehat{\OO_{6p}}^\times$ is cyclic of order $3$ and generated by $\sigma_{\zeta_2}$ and equivalently by $\sigma_{\zeta_2}^{-1}$. We have $$
\left(\sqrt[3]{2}\right)^{\sigma_{\zeta_2}-1}=\left(\frac{\zeta_2, 2}{K_2; 3} \right)=\zeta^{-\frac{4-1}{3}}=\zeta^2,
$$
where $\left(\frac{., .}{K_w;\, 3} \right)$ denotes the third Hilbert symbol over $K_w$ \cite[Chap. V, \S$3$]{neukrich}. The stated relation between $H_{3p}$ and $L$  follows from class field theory. Next, let $v$ and $\overline{v}$ be the two places of $K$ above $p$. Then, $$
\left(\sqrt[3]{p}\right)^{\sigma_{\zeta_3}-1}=\left(\frac{\zeta_3, p}{K_3; 3} \right)=\left(\frac{\zeta_v, p}{K_v; 3} \right)^{-1}. \left(\frac{\zeta_{\overline{v}}, p}{K_{\overline{v}}; 3} \right)^{-1}=\zeta^{-\frac{p-1}{3}}=\begin{cases}
    \zeta^2, \quad p\equiv 4\mod{9},\\
    \zeta, \,\,\quad p\equiv 7\mod{9}.
\end{cases}
$$
Similarly, $\left(\sqrt[3]{2}\right)^{\sigma_{\zeta_3}-1}=\left(\frac{\zeta_3,\, 2}{K_3;\, 3} \right)=\left(\frac{\zeta_2,\, 2}{K_2;\, 3} \right)^{-1}=\zeta.$ In other words, $\Gal(L/K)$ is generated by $\sigma_{\zeta_3}$. \qedhere
\end{proof}
In summary, we have the following diagram of field extensions. 
\begin{figure}[H]  
\[
\begin{tikzpicture}[scale=1.0, transform shape, line width=0.9pt]

\node (A) at (0,0) {$H_{6p}=H_{3p}(\sqrt[3]{2})$};
\node (B) at (0,-1.5) {$H_{3p}$};
\node (C) at (0,-3.5) {$L=K(\sqrt[3]{p})$};
\node (D) at (0,-5) {$K=\Q(\zeta)$};
\node (E) at (0,-6.5) {$\Q$};
\node (F) at (3,-2.7) {$K(\sqrt[3]{2})=H_6$};

\draw[-] (A) -- (B) node[midway, left] {$3$};
\draw[-] (A) -- (F)node[midway, right] {$\;\;p-1$};
\draw[-] (B) -- (C)node[midway, left] {$\frac{p-1}{3}$};
\draw[-] (C) -- (D)node[midway, left] {$3$};
\draw[-] (D) -- (E)node[midway, left] {$2$};
\draw[-] (D) -- (F)node[midway, right] {$\;\;3$};

\end{tikzpicture}
\]
\end{figure}
Define  $T := (- \zeta\sqrt[3]{4}, - \sqrt{-3}) \in E(H_6)$.
By Proposition \ref{field}, $T^{\sigma_{\zeta_2}^{-1}}=T+(0,1)$. 
Then letting \[Q:=P_1-T,\] we find by Corollary \ref{action on P_0} that $Q^{\sigma_{\zeta_2}^{-1}}=Q $. So $Q\in E(H_{3p})$. Put $R_1:=\tr_{H_{3p}/L} Q\in E(L)$.\\
Recall, $q$ is defined in \eqref{qdef} and let  $k \in \{ 1, 2 \}$ be the integer such that $q = p^k$.
\begin{proposition}\label{action on R_1}Define $\delta_p \in \Gal(L/K)$ by $\delta_p= \begin{cases}
       \sigma_{\zeta_3}^{-1} & \text{ if } p \equiv 4 \mod 9, \\
       \sigma_{\zeta_3} & \text{ if } p \equiv 7 \mod 9.
     \end{cases}$\\
Then $(\sqrt[3]{p})^{\delta_p} = \zeta \sqrt[3]{p}$ and $R_1^{\delta_p} = \zeta^k R_1$.
 \end{proposition}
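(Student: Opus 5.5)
The first claim, $(\sqrt[3]{p})^{\delta_p}=\zeta\sqrt[3]{p}$, is a restatement of Proposition \ref{field}. If $p\equiv 4 \mod 9$, part 1 gives $(\sqrt[3]{p})^{\sigma_{\zeta_3}^{-1}-1}=\zeta$. If $p\equiv 7\mod 9$, part 2 gives $(\sqrt[3]{p})^{\sigma_{\zeta_3}-1}=\zeta$. In both cases $\delta_p$ is by definition exactly the element appearing there.

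For the action on $R_1$, I will compute $Q^{\sigma_{\zeta_3}}$ and $Q^{\sigma_{\zeta_3}^{-1}}$ and then take the trace. Corollary \ref{action on P_0} gives
\[
P_1^{\sigma_{\zeta_3}}=\zeta^2P_1+(-1,0),\qquad P_1^{\sigma_{\zeta_3}^{-1}}=\zeta P_1+(-\zeta,0).
\]
The step that needs care is the action of $\sigma_{\zeta_3}^{\pm1}$ on $T=(-\zeta\sqrt[3]{4},-\sqrt{-3})$. By Proposition \ref{field}, $(\sqrt[3]{2})^{\sigma_{\zeta_3}}=\zeta\sqrt[3]{2}$, so $(\sqrt[3]{4})^{\sigma_{\zeta_3}}=\zeta^2\sqrt[3]{4}$. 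Since $\sigma_{\zeta_3}$ fixes $K$, it follows that $T^{\sigma_{\zeta_3}}=(-\zeta^3\sqrt[3]{4},-\sqrt{-3})=(-\sqrt[3]{4},-\sqrt{-3})$. I then want to identify this point as $\zeta^2T+(-1,0)$, so that
\[
Q^{\sigma_{\zeta_3}}=\zeta^2P_1+(-1,0)-\zeta^2T-(-1,0)=\zeta^2 Q.
\]
Since $[\zeta^2]T=(\zeta^2\cdot(-\zeta\sqrt[3]{4}),-\sqrt{-3})=(-\sqrt[3]{4},-\sqrt{-3})$ is already $T^{\sigma_{\zeta_3}}$, the identification amounts to the relation $T^{\sigma_{\zeta_3}}=\zeta^2T+(-1,0)$ failing unless the $2$-torsion correction cancels. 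I will therefore redo the comparison carefully. The difference $T^{\sigma_{\zeta_3}}-\zeta^2T$ must be a torsion point that matches the $(-1,0)$ from $P_1$. If the naive computation gives $0$ instead, I will recheck the choice of $\zeta$ versus $\zeta^2$ in $T$, or use the relation $Q^{\sigma_{\zeta_2}^{-1}}=Q$ together with the commutation of $\sigma_{\zeta_2}$ and $\sigma_{\zeta_3}$, to pin down the correct torsion term. This bookkeeping is the main obstacle I expect: it is an explicit chord-and-tangent check in $E(H_6)$, and the torsion points $(-1,0)$ and $(-\zeta,0)$ must cancel exactly against the Galois conjugates of $T$. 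The same check with $\sigma_{\zeta_3}^{-1}$ should give $Q^{\sigma_{\zeta_3}^{-1}}=\zeta Q$, which is consistent with the first identity because $\zeta^2\cdot\zeta=1$.

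Once $Q^{\sigma_{\zeta_3}}=\zeta^2Q$ holds, I pass to the trace. Since $\Gal(H_{3p}/K)$ is abelian, $\sigma_{\zeta_3}$ commutes with the trace from $H_{3p}$ to $L$, and $[\zeta]$ is defined over $K$. Hence
\[
R_1^{\sigma_{\zeta_3}}=\tr_{H_{3p}/L}\bigl(Q^{\tilde\sigma}\bigr)=\zeta^2R_1,\qquad R_1^{\sigma_{\zeta_3}^{-1}}=\zeta R_1,
\]
where $\tilde\sigma$ is any lift of $\sigma_{\zeta_3}$ (for instance $\sigma_{\zeta_3}$ itself on $H_{3p}$). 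It remains to match this with $\zeta^k$. For $p\equiv 4\mod 9$ we have $k=1$ and $\delta_p=\sigma_{\zeta_3}^{-1}$, so $R_1^{\delta_p}=\zeta R_1$. For $p\equiv 7\mod 9$ we have $k=2$ and $\delta_p=\sigma_{\zeta_3}$, so $R_1^{\delta_p}=\zeta^2R_1$.
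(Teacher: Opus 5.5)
Your treatment of $(\sqrt[3]{p})^{\delta_p}$, the commutation of $\sigma_{\zeta_3}$ with $\tr_{H_{3p}/L}$, and the final matching with $\zeta^k$ are all fine. The central step, however, is wrong: the identity $Q^{\sigma_{\zeta_3}}=\zeta^2Q$ that you set out to prove is false, so the argument never legitimately reaches the trace step. Your own computation is correct. From $(\sqrt[3]{4})^{\sigma_{\zeta_3}}=\zeta^2\sqrt[3]{4}$ you get $T^{\sigma_{\zeta_3}}=(-\sqrt[3]{4},-\sqrt{-3})=[\zeta^2]T$ exactly, and likewise $T^{\sigma_{\zeta_3}^{-1}}=[\zeta]T$. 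Together with Corollary \ref{action on P_0} this yields
\[
Q^{\sigma_{\zeta_3}}=\zeta^2Q+(-1,0),\qquad Q^{\sigma_{\zeta_3}^{-1}}=\zeta Q+(-\zeta,0).
\]
The $2$-torsion term here is genuinely nonzero; the second formula is just the first iterated twice, using $(-1,0)+(-\zeta,0)+(-\zeta^2,0)=\OO$.

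None of your fallbacks can remove this term. First, $T$ is fixed by the definition of $R_1$. Second, $T\in E[3]$, since its $x$-coordinate is a root of $x^3+4$, a factor of $\psi_3$. So $T^{\sigma_{\zeta_3}}-\zeta^2T$ lies in $E[3]$ and could never cancel the $2$-torsion point $(-1,0)$, whatever choice of $\zeta$ versus $\zeta^2$ you make. Third, the relation $Q^{\sigma_{\zeta_2}^{-1}}=Q$ says nothing about this term.

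The missing idea, which is exactly what the paper uses, is that the torsion term disappears only after taking the trace. The points $(-1,0)$ and $(-\zeta,0)$ are defined over $K\subset L$, so $\tr_{H_{3p}/L}$ multiplies them by $[H_{3p}:L]=\frac{p-1}{3}$. This is even because $p$ is odd, so these points trace to $\OO$. With this, your commutation argument gives
\[
R_1^{\sigma_{\zeta_3}}=\tr_{H_{3p}/L}\bigl(\zeta^2Q+(-1,0)\bigr)=\zeta^2R_1+\tfrac{p-1}{3}(-1,0)=\zeta^2R_1,
\]
and similarly $R_1^{\sigma_{\zeta_3}^{-1}}=\zeta R_1+\tfrac{p-1}{3}(-\zeta,0)=\zeta R_1$. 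After that, your case analysis for $\delta_p$ and $k$ goes through unchanged. In short, the eigen-relation holds for $R_1$ but not for $Q$.
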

 \begin{proof}
    This is a consequence of Corollary \ref{action on P_0}, Proposition \ref{field} and the fact that $\tr_{H_{3p}/L}(-\zeta^i,0)=\frac{p-1}{3}(-\zeta^i,0)$ for $i=0,1$.  \qedhere
\end{proof}
\section{Nontriviality of the Heegner point}\label{sec:nontorsion}
We prove the point $R_1\in E(L)$ is nontorsion by considering its reduction modulo $p$. 
\begin{lemma}\label{P'}
    Let $u = 1 + \sqrt[3]{2} + \sqrt[3]{4} $ be the fundamental unit of $\Q(\sqrt[3]{2})$. Then we have $f'\left(\frac{\zeta}{6}\right) = \left(-u\zeta^2 \sqrt[3]{4}, u(1+ \sqrt[3]{2}) \sqrt{-3}\right) :=P' \in E(H_6)$.
\end{lemma}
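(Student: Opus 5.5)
We need to show $f'(\zeta/6)$ equals the stated point $P'$. The point $\zeta/6$ is the CM point attached to the embedding of the order $\OO_6$, so by the same Shimura reciprocity argument as in Corollary \ref{action on P_0} (with $p$ replaced by $1$), $P'_0=[\zeta/6,1]$ is defined over the ring class field $H_6=K(\sqrt[3]{2})$. Hence $f'(\zeta/6)\in E(H_6)$, and it suffices to pin down this point exactly among the points of $E(H_6)$. I would do this by combining algebraic constraints from the Galois action with a numerical evaluation of the $\eta$-quotients in (\ref{modular parametrization}).

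First, check directly that $P'$ lies on $E$. Write $\alpha=\sqrt[3]{2}$, so that $u=1+\alpha+\alpha^2$ and $u(\alpha-1)=1$, i.e.\ $u=1/(\alpha-1)$. Then $x^3=-u^3\cdot 4=-4/(\alpha-1)^3$ and $y^2=-3u^2(1+\alpha)^2=-3(1+\alpha)^2/(\alpha-1)^2$. The identity $y^2=x^3+1$ therefore reduces to
\[
-3(1+\alpha)^2(\alpha-1)=(\alpha-1)^3-4,
\]
and both sides expand, using $\alpha^3=2$, to $-3\alpha^2-3\alpha+3$. This is a routine expansion.

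Second, use the Galois action to cut down the candidates. The analogue of Corollary \ref{action on P_0} for $h=\zeta/6$ gives $P'^{\sigma_{\zeta_2}^{-1}}=P'+(0,1)$, while Proposition \ref{field} gives $\alpha^{\sigma_{\zeta_2}^{-1}}=\zeta\alpha$. Together with the complex-conjugation relation $\overline{P'}=-\zeta^2P'$ (the analogue of $w\epsilon$), these force $f'(\zeta/6)$ to lie in a finite set of explicit algebraic candidates in $E(H_6)$, differing from one another by torsion translates. One can verify consistency on the given point: replacing $\alpha$ by $\zeta\alpha$ sends $u$ to $1+\zeta\alpha+\zeta^2\alpha^2$, and adding $(0,1)$ should produce exactly this conjugate. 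Confirming this check is optional if one relies on numerics instead.

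Third, evaluate the $\eta$-quotients $x(\lambda),y(\lambda)$ at $\lambda=\zeta/6$ numerically to high precision, as was done for Table \ref{cusp}. Here $\mathrm{Im}(6\lambda)=\sqrt3/2$, so the $q$-expansions converge rapidly. Compare the result with $-u\zeta^2\alpha^2\approx -3.847\cdot1.587\,\zeta^2$ and with $u(1+\alpha)\sqrt{-3}$. Since the candidate points form a finite set of algebraic numbers of bounded height, numerical agreement to sufficient precision identifies the point rigorously.

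The main obstacle is making the identification rigorous rather than purely numerical. The cleanest route is to show that $x(\zeta/6)$ is an algebraic integer of degree dividing $6$ whose conjugates are controlled by the Galois action. A bound on the heights of all its conjugates then makes a numerical match to a few dozen digits decisive. I would first try to get this by computing the conjugates $[\zeta/6,\rho(t)]$ as explicit points of $\mathcal{H}$ and evaluating each numerically, so that the minimal polynomial of $x(\zeta/6)$ over $K$ is recovered with integer coefficients and compared with that of $-u\zeta^2\alpha^2$.
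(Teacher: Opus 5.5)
Your argument is correct and has the same skeleton as the paper's: place $f'(\zeta/6)$ in $E(H_6)$ via the conductor-$6$ CM point, confine it to a finite explicit set, then decide numerically. The difference is where finiteness comes from.

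The paper simply uses $E(H_6)\simeq\Z/6\Z\oplus\Z/6\Z$ and picks $P'$ out of those $36$ points. By the twisting decomposition in the first Proposition, that fact needs the cubic twists $y^2=x^3+4$ and $y^2=x^3+16$ to have rank $0$ over $K$.

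You use only $\rk_\Z E(K)=0$ together with the modular action of Proposition \ref{actions on P}. That action does transfer to $[\zeta/6,1]$: for the normalized embedding $\rho'$ with fixed point $\zeta/6$ one gets $\begin{pmatrix}1&1/2\\18&10\end{pmatrix}\rho'(\zeta)^{-1}=\begin{pmatrix}-3&-1/3\\-60&-7\end{pmatrix}$, which still lies in $U$ at $2$.

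You should state explicitly why the candidate set is finite. Two solutions of $X^{\sigma}=X+(0,1)$ differ by an element of $E(H_6)^{\sigma}=E(K)$, a group of order $12$. Moreover $3X=\tr_{H_6/K}X\in E(K)$, so $f'(\zeta/6)$ is torsion without any knowledge of $\rk_\Z E(H_6)$. The conjugation relation $\overline{X}=-\zeta^2X$ then cuts the coset down to $\{P',\,P'+(-\zeta^2,0)\}$. Your route is therefore more self-contained than the paper's, at the price of some Galois bookkeeping.

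Corrections and simplifications:
\begin{enumerate}
\item The consistency check $P'^{\sigma}=P'+(0,1)$ is not optional in your route. It is what makes the coset explicit; without it you only know the coset is $f'(\zeta/6)+E(K)$. The check does hold: using $\sqrt{-3}=1+2\zeta$ one finds $x(P'+(0,1))=-2(\alpha-\zeta)(\alpha-1)/\alpha$. This equals $x(P'^{\sigma})=-\zeta\alpha^2/(\zeta\alpha-1)$ because $(\alpha-1)(\alpha-\zeta)(\alpha-\zeta^2)=\alpha^3-1=1$, and the $y$-coordinates match as well.
\item In your on-curve verification both sides equal $-3\alpha^2+3\alpha-3$, not $-3\alpha^2-3\alpha+3$.
\item Once the candidate set is explicit, a single numerical evaluation with a crude error bound settles the identification. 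The minimal-polynomial computation in your last paragraph is unnecessary.
\end{enumerate}
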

\begin{proof}
     As $\frac{\zeta}{6} \in X_0(36)(\C)$ corresponds to the isogeny $\la\frac{\zeta}{6}\ra \to \la6\zeta\ra$, we get that $f'\left(\frac{\zeta}{6}\right) \in E(H_6)$. We know that $E(H_6)$ has rank $0$ and in fact $E(H_6) \simeq \frac{\Z}{6\Z}\oplus \frac{\Z}{6\Z}$, so $f'\left(\frac{\zeta}{6}\right)$ must be one of the $36$ torsion points of $E(H_{6})$. Numerically, using SageMath, we see $f'\left(\frac{\zeta}{6}\right)$ is approximately $P'$ and hence $f'\left(\frac{\zeta}{6}\right) = P'$. \qedhere 
\end{proof}

\begin{theorem}\label{nontorsion}
 Assume that $2$ is not a cube in $\F_p$. Then
  $R_1$ is a nontorsion point in $E(L)$. In particular, the point $S_1=\tr_{H_{6p}/L}P_1=3R_1\in E(L)$ is nontorsion.

\end{theorem}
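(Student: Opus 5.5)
The plan is to reduce modulo a prime $\mathfrak{P}$ of $H_{6p}$ above $p$ and compare the reduction of $R_1$ with an explicit torsion computation involving the point $P'$ of Lemma \ref{P'}.

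\textbf{Step 1: reduce to a $3$-torsion question.} Suppose $R_1$ is torsion. Then $R_1\in E(L)_{\tors}=E(K)_{\tors}$, so $R_1$ is fixed by $\delta_p$. Proposition \ref{action on R_1} gives $R_1^{\delta_p}=\zeta^kR_1$, hence $(1-\zeta^k)R_1=\mathcal{O}$. Therefore
\[R_1\in E[1-\zeta]=\{\mathcal{O},(0,\pm1)\},\]
and in particular $3R_1=\mathcal{O}$. Since $p\ge 13$, reduction modulo $\mathfrak{P}$ is injective on prime-to-$p$ torsion. So it suffices to show that $\widetilde{R_1}$, the reduction of $R_1$, is not killed by $3$.

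\textbf{Step 2: make the trace computable after reduction.} Since $p$ splits in $K$ and $\Gal(H_{3p}/K)\simeq (\OO_K/p)^\times/\langle\text{units}\rangle$ comes from the local units at $v$ and $\overline{v}$, I expect the primes above $p$ to be totally ramified in $H_{3p}/K$, with residue field $\F_p$. Inertia then acts trivially on reductions, so
\[\widetilde{R_1}=\tfrac{p-1}{3}\,\widetilde{Q}=\tfrac{p-1}{3}\bigl(\widetilde{P_1}-\widetilde{T}\bigr).\]
Because $p\equiv 4,7 \bmod 9$, the integer $\frac{p-1}{3}$ is prime to $3$. It is therefore enough to show that $\widetilde{P_1}-\widetilde{T}$ is not $3$-torsion.

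\textbf{Step 3: the Eichler–Shimura congruence (main obstacle).} This is the step I expect to be hardest. We have $h=\frac{p\zeta}{6}=\begin{pmatrix}p&0\\0&1\end{pmatrix}\cdot\frac{\zeta}{6}$, so $P_0$ is one of the $p+1$ Hecke correspondents $T_p([\zeta/6])$. By the Eichler–Shimura relation $T_p\equiv \mathrm{Frob}+\mathrm{Frob}^t \pmod p$, the reduction of each correspondent is either $\mathrm{Frob}(\widetilde{P'})$ or a Verschiebung preimage. The task is to identify, via the choice of $\mathfrak{P}$ and the "$p$-part" of the isogeny $\langle h\rangle\to\langle 6\zeta\rangle$, which case occurs for $P_0$. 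I expect to get $\widetilde{P_1}=\widetilde{P'}^{(p)}$, using Serre–Tate or Deuring lifting with the isogeny of degree $p$ between CM curves of conductor $6$ and $6p$.

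\textbf{Step 4: compute the Frobenius action.} The hypothesis that $2$ is not a cube in $\F_p$ means $v$ is inert in $H_6=K(\sqrt[3]{2})$. So the $p$-power Frobenius acts on $\sqrt[3]{2}$ through a nontrivial element $\varphi$ of $\Gal(H_6/K)$, namely $\sigma_{\zeta_2}^{\pm1}$; I will pin down the sign with the cubic residue symbol $2^{(p-1)/3}\bmod v$. Then
\[\widetilde{P_1}-\widetilde{T}=\widetilde{(P'^{\varphi}-T)}.\]
Here $P'^{\varphi}-T\in E(H_6)_{\tors}\simeq(\Z/6)^2$ is explicit: apply $\sqrt[3]{2}\mapsto\zeta^{\pm1}\sqrt[3]{2}$ to the formula for $P'$ in Lemma \ref{P'} and subtract $T=(-\zeta\sqrt[3]{4},-\sqrt{-3})$. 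I expect this difference to be a point of order $2$ or $6$ lying in $E(K)$, for instance $(-\zeta^i,0)$ plus a $3$-torsion point. This should hold for either choice of sign, and can be checked by the addition law or in SageMath.

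\textbf{Step 5: conclude.} A point of exact order $2$ or $6$ is not $3$-torsion. Reduction is injective on this torsion, so $\widetilde{R_1}$ is not $3$-torsion, contradicting Step 1. Hence $R_1$ is nontorsion, and so is $S_1=3R_1$.

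Without the cube hypothesis, $\varphi$ would be trivial. Then $P'-T$ might be $3$-torsion, and the argument would give no conclusion.
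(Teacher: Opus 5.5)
There is a genuine gap in the last inference of your Step 2, and it also undermines Step 5. The integer $\frac{p-1}{3}$ is indeed prime to $3$, but it is \emph{even}, since $p\equiv 1 \pmod 6$. By your Step 4, $\widetilde{Q}$ is the reduction of a point of $E(H_6)_{\tors}\simeq(\Z/6\Z)^2$, so $6\widetilde{Q}=\mathcal{O}$. For such a point, ``not killed by $3$'' means exactly that its $2$-primary part is nonzero, and multiplication by an even integer destroys that part. Concretely,
\[
3\widetilde{R_1}=(p-1)\widetilde{Q}=\mathcal{O},
\]
because $6\mid p-1$. So $\widetilde{R_1}$ is $3$-torsion at \emph{every} prime above $p$, whatever Step 4 produces. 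No argument that looks at the reduction at a single prime can succeed.

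Step 5 conflates $\widetilde{Q}$, which does have order $2$ or $6$, with $\widetilde{R_1}=\frac{p-1}{3}\widetilde{Q}$, which has order $1$ or $3$. In fact $\frac{p-1}{3}\equiv\pm2\pmod 6$, and the paper finds $\widetilde{Q}=(-v^2,0)$ at one prime and $(2v,-3)$ at the other. This gives $\widetilde{R_1}=\mathcal{O}$ at the first and $(0,\pm1)$ at the second. Each value on its own is compatible with $R_1\in\{\mathcal{O},(0,\pm1)\}$.

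The missing idea is to reduce at both primes at once: $\mathfrak{P}\mid\mathfrak{p}$ and $\overline{\mathfrak{P}}\mid\overline{\mathfrak{p}}$. Work modulo $p\OO_{H_{6p}}$, using $\OO_K/p\OO_K\simeq\F_p\times\F_p$ with $\zeta\mapsto(v,v^2)$ and $v=4^{(p-1)/3}$. The $p$-power Frobenius at these two primes acts on $\sqrt[3]{2}$ through mutually inverse nontrivial elements of $\Gal(H_6/K)$; this is where the hypothesis that $2$ is not a cube enters. That is precisely why $\widetilde{Q}$ has order $2$ at one prime and order $6$ at the other.

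One then gets $R_1\equiv(\mathcal{O},(0,\pm1))$ in $E(\F_p)\times E(\F_p)$. The only candidates $\mathcal{O}$ and $(0,\pm1)$ are $\Q$-rational, so they reduce to $(\mathcal{O},\mathcal{O})$ and $((0,\pm1),(0,\pm1))$. This asymmetry between the two primes is what rules out $R_1\in E(L)[3]$.

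A smaller point: the congruence the paper imports from Dasgupta--Voight is $\widetilde{P'}=\mathrm{Frob}(\widetilde{P_1})$, the reverse of your guess $\widetilde{P_1}=\mathrm{Frob}(\widetilde{P'})$ in Step 3. You need the correct direction, but in the two-prime argument it only changes which of $\mathfrak{P},\overline{\mathfrak{P}}$ sees order $2$ and which sees order $6$.
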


\begin{proof}
Recall from Proposition \ref{action on R_1}, $R_1 \in E(L)$ satisfies $R_1^{\delta_p} = \zeta^kR_1$.
Further note that the points $S$ in $E(L)_{\tors}$ which satisfies $S = S^{\delta_{p}} = \zeta^k S $ is given by $ E(L)[3] =\{\OO, (0,\pm1)\}$. It suffices to show $R_1 \notin E(L)[3]$ to conclude $R_1$ is a nontorsion point in $E(L)$.\\
Since $x(\lambda), y(\lambda)$ in (\ref{modular parametrization}) are modular functions as well as eta quotients, by Ligozat's criterion \cite{ligozat}, they have $q$-expansion with integral coefficients.
Next consider $\frac{\zeta}{6} \in X_0(36p)$, which corresponds to the isogeny $\la\frac{\zeta}{6}\ra \to \la 6p\zeta \ra $. The elliptic curve corresponding to $\frac{\zeta}{6}$ has conductor $6$ and the elliptic curve corresponding to $6p \zeta$ has conductor $6p$.
Then by \cite[Prop. $5.2.1$]{mock heegner},
$$x\left(\frac{\zeta}{6}\right) \equiv x\left(\frac{p\zeta}{6}\right)^p \mod{p\OO_{H_{6p}}} \text{ and } y\left(\frac{\zeta}{6}\right) \equiv y\left(\frac{p \zeta}{6}\right)^p \mod{p\OO_{H_{6p}}}.$$
The prime $p$ splits in $\OO_K$ as $p\OO_K=\p\overline{\p}$ and both the primes $\p$ and $\overline{\p}$ are totally ramified in $H_{3p}/K$. Since $2$ is not a cube in $\F_p$, the primes $\mathfrak{P}$ and $\overline{\mathfrak{P}} $, above $\p$ and $\overline{\p}$, respectively, remain inert in $\OO_{H_{6p}}$. Note that $\OO_{H_{3p}}/ \mathfrak{P} \simeq \OO_{H_{3p}}/ \mathfrak{\overline{P}} \simeq \F_p$ and $\OO_{H_{6p}}/ \mathfrak{P} \simeq \OO_{H_{6p}}/ \mathfrak{\overline{P}} \simeq \F_p(\sqrt[3]{2})$. Also $E$ has good reduction at $\mathfrak{P}$ and $\mathfrak{\overline{P}}$. We continue to   denote by $E$, the reduction  of $E$ modulo $\mathfrak P$ and $\mathfrak{\overline{P}}$. Then,
$$P' = \left(x\left(\frac{\zeta}{6}\right),y\left(\frac{\zeta}{6}\right)\right) \equiv \left(x\left(\frac{p \zeta}{6}\right)^p, y\left(\frac{p \zeta}{6}\right)^p\right) = \fr(P_1) \in E\left(\F_p(\sqrt[3]{2})\right) \times E\left(\F_p(\sqrt[3]{2})\right)$$
and hence
$$P' - \fr(T)=   \fr(P_1)-\fr(T)=\fr(Q) \in E\left(\F_p(\sqrt[3]{2})\right) \times E\left(\F_p(\sqrt[3]{2})\right).$$
Recall from \ref{subsection 2.2} that $Q \in E(H_{3p})$ and hence
 $ Q = \fr(Q) \in E(\F_{p}) \times E(\F_p)$. Further, from Lemma \ref{P'},  $6P' = \OO \in E(H_{6p})$. Moreover,  $T \in E(H_6)$ and hence $6T=\OO$. Consequently, $(\OO, \OO) = 6 \fr({Q}) = 6 Q \in E(\F_p) \times E(\F_p)$.\\ 
Write $p = 1+ 2^r \cdot 3m$ with $(m,6)=1$ and $r \ge 1$. As the primes above $p$ are totally ramified in $H_{3p}/L$ \cite{mock heegner}, 
\begin{equation}
R_1 = Tr_{H_{3p}/L} Q \equiv 2^rm Q \equiv  \pm 2Q  \in E(\F_p) \times E(\F_p).
\end{equation}
Next,
$$\fr(T) = \left( - \zeta \sqrt[3]{4} (4^{\frac{p-1}{3}}), - \sqrt{-3} (\sqrt{-3})^{p-1}\right)\in E\left(\F_p(\sqrt[3]{2})\right) \times E\left(\F_p(\sqrt[3]{2})\right).  $$
Observe that $(\sqrt{-3})^{p-1} = 1 \in \F_p$ and $4^{\frac{p-1}{3}} = v \in \F_p$ such that $v^2+v+1 =0 \in \F_p$.
We fix an isomorphism $\alpha: \OO_K/p\OO_K \to \F_p \times \F_p$ given by $\zeta \mapsto (v,v^2)$. Under the isomorphism $\alpha$, we have that
$$\fr(T) = \left((- v^2 \sqrt[3]{4}, - (1+2v)), (-  \sqrt[3]{4}, - (1+2v^2))\right) \in E\left(\F_p(\sqrt[3]{2})\right) \times E\left(\F_p(\sqrt[3]{2})\right) \quad \  \text { and} $$
$$P'= \left((- v^2u \sqrt[3]{4}, u(1+ \sqrt[3]{2}) (1+2v)) , (-vu \sqrt[3]{4}, u(1+ \sqrt[3]{2}) (1+2v^2))\right) \in  E\left(\F_p(\sqrt[3]{2})\right) \times E\left(\F_p(\sqrt[3]{2})\right).$$
Thus we compute $Q = \fr(Q) = \left((-v^2,0), (2v, -3)\right) \in E(\F_p) \times E(\F_p)$ and hence 
\begin{equation}\label{r1modp}R_1 =\pm2\left( (-v^2, 0),  (2v, -3)\right)=\left(\OO,(0,\pm1)\right) \in E(\F_p) \times E(\F_p).\end{equation}
It is plain that the images of $\OO $ and $(0,\pm1)$ are $(\OO, \OO) $ and $\big((0,\pm1), (0,\pm1)\big)$  in $E(\F_p) \times E(\F_p)$, respectively and consequently, $R_1\notin E(L)[3]$.\\
Lastly, $S_1=\mathrm{Tr}_{H_{6p}/L}(P_1- T+ T)=3R_1+\tr_{H_{6p}/L}( T)$. However, $\tr_{H_{6p}/H_{3p}}( T)=(1+\zeta+\zeta^2)T=0$ and thus  $S_1$ is nontorsion. \qedhere \end{proof}

Recall   $k \in \{ 1, 2 \}$ be the integer such that $q = p^k$. Also recall that by Proposition \ref{action on R_1}, $R_1 \in E(L)$ satisfies $R_1^{\delta_p} = \zeta^kR_1$ and by the general theory of twisting, we have an isomorphism $\phi_k: E(L)^{\delta_p = \zeta^k} \simeq E_1^{(q)}(K)$.
Define $R:=\phi_k(R_1) \in E_1^{(q)}(K)$. Then $R$ is a nontorsion point in $E_1^{(q)}(K)$. Set $Z_1 := (1-\zeta^2)R_1 \in E(L)^{\delta_p = \zeta^k}$ and put $Z:=\phi_k(Z_1) = (1-\zeta^2)R \in E_1^{(q)}(K)$. Then $Z$ is also a nontorsion point in $E_1^{(q)}(K)$.

\begin{lemma}\label{phi(Z_1)}
With the notation as above, $Z \in E_1^{(q)}(\Q)$ is a nontorsion point. In particular, $2q$ is a sum of two rational cubes.
\end{lemma}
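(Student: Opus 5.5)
The plan is to show that $Z$ is fixed by complex conjugation $c$, i.e.\ by the nontrivial element of $\Gal(K/\Q)$. Since $E_1^{(q)}$ is defined over $\Q$, this gives $Z\in E_1^{(q)}(\Q)$. Nontorsionness is inherited from $R$: multiplication by $1-\zeta^2$ is an isogeny of degree $3$, so it has finite kernel, and $R$ is nontorsion by Theorem \ref{nontorsion}. Everything therefore reduces to the identity
\[
\overline{R}=-\zeta^2 R .
\]
Granting it, we get $\overline{Z}=(1-\overline{\zeta}^{2})\overline{R}=(1-\zeta)(-\zeta^2)R=(1-\zeta^2)R=Z$.

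To obtain $\overline{R}=-\zeta^2R$, I would first work on $E(L)$ and then transport through $\phi_k$. Fix the real cube roots $\sqrt[3]{p},\sqrt[3]{2}$ and extend $c$ to $H_{6p}$ so that it agrees with $\sigma_{w\epsilon}$ on $P_1$. Corollary \ref{action on P_0} gives $\overline{P_1}=-\zeta^2P_1$. For $T=(-\zeta\sqrt[3]{4},-\sqrt{-3})$ a direct computation gives $\overline{T}=(-\zeta^2\sqrt[3]{4},\sqrt{-3})$, whereas $-\zeta^2T=(-\sqrt[3]{4},\sqrt{-3})$. Hence $\overline{T}=-\zeta^2T+D$ for an explicit point $D$ in the $36$-element group $E(H_6)_{\tors}$.

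It follows that $\overline{Q}=-\zeta^2Q-D$. Since conjugation normalizes $\Gal(H_{3p}/L)$, taking the trace gives
\[
\overline{R_1}=-\zeta^2R_1-\tr_{H_{3p}/L}D .
\]
I expect this torsion error term to vanish. If it does not vanish outright, I would instead compare the $\delta_p$-eigenspaces. Both $\overline{R_1}$ and $-\zeta^2R_1$ lie in the eigenspace $E(L)^{\delta_p=\zeta^{k}}$: conjugation sends $\delta_p$ to $\delta_p^{-1}$ and $\zeta$ to $\zeta^2$. The only torsion in that eigenspace is $E(L)[3]=\{\OO,(0,\pm1)\}$, as noted in the proof of Theorem \ref{nontorsion}. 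So the discrepancy is at worst a $3$-torsion point, and this ambiguity is killed or controlled by multiplication by $1-\zeta^2$.

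It then remains to check how $\phi_k$ interacts with $c$. With $\sqrt[3]{p}$ real, $\phi_k$ is given by $(x,y)\mapsto (q^{2/3}x,\,q\,y)$ up to the normalization fixed in the twisting theory, and this map commutes with complex conjugation. Hence $\overline{R}=\phi_k(\overline{R_1})$, and the required identity transfers from $R_1$ to $R$.

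For the final assertion, $E_1^{(q)}:y^2=x^3+q^2$ is $3$-isogenous over $\Q$ to $y^2=x^3-27q^2$. Because $2^6=64$, the latter is isomorphic over $\Q$ to $y^2=x^3-1728q^2=x^3-432(2q)^2$, which is the Weierstrass model of $X^3+Y^3=2q$. So $\rk_\Z\,E_{2q}(\Q)>0$. Since $2q$ is cube-free and greater than $2$, the torsion is trivial and rational points on the affine part exist, so $2q$ is a sum of two rational cubes.

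The main obstacle is the bookkeeping of the conjugation action: choosing the extension of $c$ to $H_{6p}$ consistently with $\sigma_{w\epsilon}$ and with the real cube roots, and verifying that the torsion correction from $T$ and the twisting normalization do not spoil $\overline{R}=-\zeta^2R$. The eigenspace argument above is the fallback if the correction term does not vanish directly.
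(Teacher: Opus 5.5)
Your proof is correct and is essentially the paper's argument. Both rest on the relation $\overline{R_1}=-\zeta^2R_1+t$ with $t\in\{\OO,(0,\pm1)\}$. The paper uses it to identify the manifestly rational point $R+\overline{R}$ with $Z+(0,\pm q)$, while you check $\overline{Z}=Z$ directly using $(1-\zeta)(0,\pm q)=\OO$.

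Your main-line expectation that the error term vanishes is false, so the fallback is the step that actually carries the proof. The point $T$ is $3$-torsion with $(1-\zeta)T=(0,1)$, so $D=(\zeta^2-\zeta)T=(0,-1)$. Since $D\in E(K)$, we get $\tr_{H_{3p}/L}D=\frac{p-1}{3}D$, which is not $\OO$ because $3\nmid\frac{p-1}{3}$.
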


\begin{proof} Recall from Corollary \ref{action on P_0}, we have $\overline{P}_1 = -\zeta^2P_1$. Note that $\overline{T} = - \zeta T$ and hence $\overline{Q}  = \overline{P_1-T}=-\zeta^2 Q + (0, 1) $, which in turn implies that $\overline{R_1} = -\zeta^2R_1 + (0 ,\pm 1)$. As a consequence, we see that $R_1 + \overline{R_1} = Z_1 + (0, \pm 1)$. As $Z_1 \in E(L)^{\delta_p = \zeta^k}$, we get that $R_1 + \overline{R_1} \in E(L)^{\delta_p = \zeta^k}$ and clearly  $R_1+ \overline{R_1}$ is fixed by   $\Gal(K/\Q)$. Now, $  R + \overline{R} = \phi_k(R_1+ \overline{R_1} )  = \phi_k( Z_1 + (0, \pm 1) ) = Z + (0, \pm q) \in E_1^{(q)}(\Q) $. It follows that $Z\in E_1^{(q)}(\Q)$  is a nontorsion point. \qedhere    
\end{proof}

\begin{lemma}\label{no R_1=aX+Y}
 There does not exist $X \in E(L)^{\delta_p = \zeta^k}$ and $Y \in E(L)^{\delta_p = \zeta^k}_{\tors}$ such that
  $$R_1 = (1-\zeta)X + Y.$$  
\end{lemma}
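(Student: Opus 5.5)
We argue by contradiction and reuse the reduction modulo the primes above $p$ from the proof of Theorem \ref{nontorsion}. Suppose $R_1=(1-\zeta)X+Y$ with $X\in E(L)^{\delta_p=\zeta^k}$ and $Y\in E(L)^{\delta_p=\zeta^k}_{\tors}$.

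First we pin down $Y$. By the proof of Theorem \ref{nontorsion}, the torsion points fixed in this eigenspace form $E(L)[3]=\{\OO,(0,\pm1)\}$, so $Y\in\{\OO,(0,\pm1)\}$.

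Next we reduce modulo $p$. Since $p$ is totally ramified in $L/K$, each prime $\p,\overline{\p}$ of $K$ has a unique prime of $L$ above it, with residue field $\F_p$. So the reduction gives a map $E(L)\to E(\F_p)\times E(\F_p)$, and via $\alpha$ the endomorphism $[\zeta]$ acts on the two factors as $(x,y)\mapsto(vx,y)$ and $(x,y)\mapsto(v^2x,y)$. The key extra input is the eigenspace condition. The generator $\delta_p$ of $\Gal(L/K)$ lies in the inertia group at each prime above $p$, so it acts trivially on the residue fields. Reducing $X^{\delta_p}=\zeta^kX$ therefore gives $\widetilde X=[\zeta^k]\widetilde X$ in each factor, that is, $(1-\zeta^k)\widetilde X=\OO$. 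Since $1-\zeta^k$ is an associate of $1-\zeta$, we get $\widetilde X\in E(\F_p)[1-\zeta]$ in each factor. For $y^2=x^3+1$ this kernel is $\{\OO,(0,\pm1)\}$: it is the $3$-torsion fixed by $\zeta$, and these points are exactly those with $x=0$.

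Now compute $(1-\zeta)\widetilde X$ in each factor for $\widetilde X\in\{\OO,(0,\pm1)\}$. Since $[\zeta](0,\pm1)=(0,\pm1)$, we get $(1-\zeta)\widetilde X=\widetilde X-[\zeta]\widetilde X=\OO$. Hence $\widetilde{R_1}=\widetilde Y$, with the same value in both factors. But by \eqref{r1modp}, $\widetilde{R_1}=(\OO,(0,\pm1))$, whose two components differ. Meanwhile the reduction of $Y\in\{\OO,(0,\pm1)\}$ is either $(\OO,\OO)$ or $((0,\pm1),(0,\pm1))$. This is a contradiction.

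The main obstacle is the claim that reduction of the eigenspace lands in the $(1-\zeta)$-kernel. This needs $\delta_p$ to act trivially on the residue field at a totally ramified prime, and $[\zeta]$ to be compatible with reduction under $\alpha$. I would check both carefully, in particular that the reductions at the two primes are taken at the primes of $L$ lying under the $\mathfrak P,\overline{\mathfrak P}$ used in \eqref{r1modp}.
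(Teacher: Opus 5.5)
Your proof is correct and is essentially the paper's own argument. You reduce modulo the primes above $p$ and use that $\delta_p$ lies in inertia, so $X^{\delta_p}=\zeta^kX$ forces $(1-\zeta^k)\widetilde X=\OO$, hence $(1-\zeta)\widetilde X=\OO$ and $\widetilde{R_1}=\widetilde Y$; this contradicts \eqref{r1modp}, because $(\OO,(0,\pm1))$ is not the reduction of any element of $\{\OO,(0,\pm1)\}$. Your explicit computation of $E(\F_p)[1-\zeta]$ is harmless but unnecessary, since $(1-\zeta)\widetilde X=\OO$ already follows from $(1-\zeta^k)\widetilde X=\OO$ and $1-\zeta^k$ being an associate of $1-\zeta$.
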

\begin{proof}
Recall that $E(L)^{\delta_p = \zeta^k}_{\tors}= \{ \OO, (0, \pm 1)\}$. If possible, assume that $R_1 = (1-\zeta)X + Y, \text{ with } R_1,X \\\in E(L)^{\delta_p = \zeta^k}\; \text{and}\; Y \in E(L)^{\delta_p = \zeta^k}_{\tors}.$
As the primes $\p,\,\overline{\p}$ over $p$ are totally ramified in $H_{3p}/K$, it follows that
$X^{\delta_p} \equiv X \in E(\F_p) \times E(\F_p)$.
Since $X \in E(L)^{\delta_p = \zeta^k}$, we have $X^{\delta_p} = \zeta^k X$ and hence
$ (1-\zeta^k)X \equiv \OO \in E(\F_p) \times E(\F_p),$ 
which in turn implies 
$   (1-\zeta)X  \equiv \OO \in E(\F_p) \times E(\F_p).$ So we deduce 
$$ R_1 \equiv Y \in E(\F_p) \times E(\F_p).$$
Then from  \eqref{r1modp}, we deduce that
 $Y \equiv (\OO, (0, \pm 1)) \in  E(\F_p) \times E(\F_p),$
which is a contradiction to the fact that $ Y \in E(L)^{\delta = \zeta^k}_{\tors}= \{ \OO, (0, \pm 1)\}$. \qedhere
\end{proof}
\section{Explicit Gross-Zagier Formulae  and the BSD Invariants  }\label{section 3}
We briefly recall the setup and notation required for discussing the Gross-Zagier formula, following \cite{G-Z formula}. Recall, $\mathbb{A}_\Q$ and $\mathbb{A}_K$ denote the ring of adeles over $\Q$ and $K$, respectively. Let $\pi$ denote the automorphic representation of $\gl_2(\mathbb{A}_\Q)$ associated to $E/\Q$. Let $\chi:\Gal(H_{6p}/K)\rightarrow \C^\times$ be the cubic character given by $\chi(\tau)=(\sqrt[3]{p})^{\tau-1}$. Set 
\[
L(s, E, \chi):=L(s-1/2, \pi_K \otimes\chi), \hspace{1cm}\] 
where $\pi_K$ denotes the base change of $\pi$ to $\gl_2(\mathbb{A}_K)$, and $L(s-1/2, \pi_K \otimes\chi)$ is the automorphic $L$-function attached to $\pi_K \otimes\chi$, respectively.  The $L$-function $L(s, E, \chi)$ satisfies \[L(s, E, \chi)=L(s, E_1^{(q)}) L(s,E_2^{(q)}),\]
where,  for $k=1,2$, $L(s, E_k^{(q)})$ is the complex $L$-function  of $E_k^{(q)}$.\\
The elliptic curve $E$ has conductor $36$.  Now recall $h=\frac{p\zeta}{6} \in \mathcal{H}$, $P_0=[h,1]_{U_0(36)}$ is the CM point on $X_0(36)(H_{6p})$ and $f'(P_0)=P_1\in E(H_{6p})$. Define the Heegner point \[
S_1:=\tr_{H_{6p}/L}P_1 \in E(L).
\]
Let $\Omega_k^{(q)}$ denote the minimal real period of $E_k^{(q)}$ defined by $\Omega_k^{(q)}=\int_{E_k^{(q)}(\R)}|\omega_{E_k^{(q)}}|$, where $\omega_{E_k^{(q)}}$ is the invariant differential on the minimal model of $E_k^{(q)}$. Let $\widehat{h}_{\Q}(.)$ be the N\'eron-Tate height on $E_1^{(q)}(\Q)$.

\begin{theorem}\label{G-Z}
    Let $p\equiv4,7 \mod 9$ be a prime number such that $2$ is not a cube modulo $p$. We have the following explicit height formula of the Heegner point:
    \[
    \frac{L'(1, E_1^{(q)})L(1,E_2^{(q)})}{\Omega_1^{(q)}\Omega_2^{(q)}}=\frac{1}{3} \widehat{h}_\Q(S_1).
    \]
     
\end{theorem}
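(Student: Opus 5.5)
We will obtain the formula by specializing the general explicit Gross--Zagier formula of Cai--Shu--Tian \cite{G-Z formula} to our data: the newform of $E$ (level $N=36$), the field $K=\Q(\sqrt{-3})$, the character $\chi$ of $\Gal(H_{6p}/K)$ of conductor $c=6p$, and the CM point $P_0=[h,1]$ with $h=p\zeta/6$. That formula, in the form adapted to $X_0(N)$ with a non-Heegner-hypothesis level (the ``variation'' treated in \cite{G-Z formula}), has the shape
\[
L'(1,E,\chi)=2^{-\mu(N,D)}\,\frac{8\pi^2(\phi,\phi)_{\Gamma_0(N)}}{u^2\sqrt{|D_Kc^2|}}\cdot\frac{\widehat h_K(P_\chi(f))}{\deg f},
\]
where $P_\chi(f)=\sum_{\tau\in\Gal(H_{6p}/K)}f(P_0)^\tau\chi(\tau)$. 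We will take $f=f'$, which is an isomorphism, so that $\deg f=1$.

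\textbf{Step 1: the $L$-side.} We use $L(s,E,\chi)=L(s,E_1^{(q)})L(s,E_2^{(q)})$. Since $\mathrm{rank}\,E_2^{(q)}=0$ and the root numbers are $-1$ and $+1$, we get $L'(1,E,\chi)=L'(1,E_1^{(q)})L(1,E_2^{(q)})$.

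\textbf{Step 2: the Petersson norm.} We rewrite $8\pi^2(\phi,\phi)$ via $\Omega_E$, namely $8\pi^2(\phi,\phi)=\Omega_E^+\Omega_E^-\cdot(\text{Manin constant})^2$, with the Manin constant equal to $1$ for $36a$. We then relate $\Omega_E^{\pm}$ to $\Omega_1^{(q)}\Omega_2^{(q)}$ by the explicit cubic twist periods: the minimal models of $y^2=x^3+q^2$ and $y^2=x^3+q^4$ have periods $\Omega_E/q^{1/3}$ and $\Omega_E/q^{2/3}$ up to explicit factors of $\sqrt3$ and powers of $2$ coming from the real-period conventions. These factors should cancel exactly against $\sqrt{|D_Kc^2|}=6p\sqrt3$ and the $p$-power in the twists. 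This is a finite bookkeeping computation, done separately for $q=p$ and $q=p^2$.

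\textbf{Step 3: the height side.} Since $\chi$ factors through $\Gal(L/K)$, we have $P_\chi(f')=\sum_{\tau\in\Gal(L/K)}\chi(\tau)^{-1}(S_1)^{\tau}$ up to convention. By Proposition \ref{action on R_1}, $S_1=3R_1$ lies in the $\chi$-eigenspace, so $P_\chi=3S_1$, and $\widehat h_K(P_\chi)=9\widehat h_K(S_1)$. Transporting through $\phi_k$ to $E_1^{(q)}(K)$ and using Lemma \ref{phi(Z_1)} to descend to $\Q$ expresses this in terms of $\widehat h_\Q$, with $\widehat h_K=2\widehat h_\Q$ for points of $E_1^{(q)}(\Q)$. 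For the other conjugate we use $\overline{R_1}=-\zeta^2R_1+\text{torsion}$, noting that multiplication by units preserves height. We must also track the normalization relating heights on $E(L)$ to heights on the twist, which introduces a factor $[L:K]=3$.

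\textbf{Step 4: assembling constants.} Finally we insert $u=|\OO_{K}^\times|/2=3$, read off $\mu(36,-3)$ (the number of primes dividing both $N$ and $D$, here $\{3\}$), and collect everything into the constant $1/3$. We expect Steps 2--4, the exact constant bookkeeping, to be the main obstacle: the powers of $2$ and $3$ from periods, the Manin constant, $u$, and the eigenspace projection must combine exactly. The first thing we will try is to calibrate all conventions numerically on the smallest example, $p=13$, with $q=13$ and $2$ a noncube mod $13$, using SageMath to compute both sides before writing the general argument.
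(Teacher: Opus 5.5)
Your outline follows the paper's route: Cai--Shu--Tian's formula, then $8\pi^2(\phi,\phi)_{\Gamma_0(36)}=\Omega^2/\sqrt3$ and $\Omega_1^{(q)}\Omega_2^{(q)}=\Omega^2/p$, then reduction of the Heegner cycle to $S_1$ via $S_1^{\delta_p}=\zeta^kS_1$. But the content of the theorem is the constant $\frac13$, and the formula you start from cannot produce it.

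The identity you display, with $2^{-\mu(N,D)}$, $u^2$ and $\sqrt{|Dc^2|}$, is Cai--Shu--Tian's main Gross--Zagier theorem. That theorem assumes, among other things, $(c,N)=1$. Here $c=6p$ and $N=36$ share both $2$ (inert in $K$, $4\,\|\,N$) and $3$ (ramified, $9\,\|\,N$). The paper instead uses their Theorem~1.6 together with the example worked out after it, which for $h=p\zeta/6$ gives
\[
L'(1,E,\chi)=9\cdot\frac{8\pi^2(\phi,\phi)_{\Gamma_0(36)}}{\sqrt3\,p}\,\langle P^0_\chi(f'),P^0_{\chi^{-1}}(f')\rangle_{K,K},\qquad P^0_\chi(f')=\frac19\sum_{t\in\mathrm{Pic}(\OO_{6p})}P_1^{\sigma_t}\chi(t).
\]
So the effective prefactor in front of the unnormalized cycle is $8\pi^2(\phi,\phi)_{\Gamma_0(36)}/(9\sqrt3\,p)$, whence $L'(1,E,\chi)=3\,\Omega_1^{(q)}\Omega_2^{(q)}\langle P^0_\chi,P^0_{\chi^{-1}}\rangle_{K,K}$. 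With your values $\mu=1$, $u=3$, $\sqrt{|Dc^2|}=6\sqrt3\,p$, your prefactor is $8\pi^2(\phi,\phi)_{\Gamma_0(36)}/(108\sqrt3\,p)$, which is off by a factor $12$. No period convention absorbs this. In fact no stray $\sqrt3$ or powers of $2$ appear in the periods: the minimal models are $y^2=x^3+p^2$ and $y^2=x^3+p^4$ in some order, and the real period of $y^2=x^3+D$ is exactly $D^{-1/6}\Omega$. Supplying the correct form of the formula at this level is the missing step. A numerical calibration at $p=13$ would expose the discrepancy but proves nothing for general $p$.

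Your Step~3 is also wrong as stated. In $E(H_{6p})\otimes_{\Z}\C$ the value $\chi(\tau)$ is a scalar in the coefficients, whereas $S_1^{\delta_p}=\zeta^kS_1$ is the CM action. So you cannot conclude $P_\chi=3S_1$ and $\widehat h_K(P_\chi)=9\,\widehat h_K(S_1)$. Expanding bilinearly, and using Galois invariance of the pairing and $\chi(\delta_p)=\zeta$, one gets $\bigl\langle\sum_\delta\chi(\delta)S_1^\delta,\sum_\delta\chi^{-1}(\delta)S_1^\delta\bigr\rangle=3\bigl(\langle S_1,S_1\rangle-\langle S_1,S_1^{\delta_p}\rangle\bigr)$. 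Moreover $\langle S_1,\zeta^kS_1\rangle=-\frac12\widehat h_K(S_1)$, because $\widehat h_K((1+\zeta^k)S_1)=\widehat h_K(S_1)$. This gives $\frac92\widehat h_K(S_1)$, hence $\langle P^0_\chi,P^0_{\chi^{-1}}\rangle_{K,K}=\frac1{18}\widehat h_K(S_1)=\frac19\widehat h_\Q(S_1)$, and the constant $\frac13$. Your ingredients as written would instead produce $\frac1{18}$.

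There is also no extra factor $[L:K]$. The map $\phi_k$ is an isomorphism over $L$, so canonical heights normalized over $K$ are preserved, and the descent of $Z$ to $E_1^{(q)}(\Q)$ plays no role here. Finally, in Step~1 it is $L(1,E_1^{(q)})=0$ (root number $-1$) that kills the term $L(1,E_1^{(q)})L'(1,E_2^{(q)})$; the rank of $E_2^{(q)}$ is irrelevant.
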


\begin{proof}
   We would apply variation of Gross-Zagier formula \cite[Thm. $1.6$]{G-Z formula} to compute the $L'(1, E, \chi)$. Recall we have $P_0=\frac{p \zeta}{6}$ and  $f': X_0(36) \to E$. Whereas \cite[Thm. 1.6]{G-Z formula} holds more generally for an abelian variety over a totally real field parametrized by certain Shimura curve, the Example \cite[p. $2535$]{G-Z formula} following the stated theorem applies in our setting and we obtain the following: 

   \begin{equation}\label{L'}
    L'(1,E,\chi)=9\cdot\frac{(8\pi^2)\cdot(\phi,\phi)_{\Gamma_0(36)}}{\sqrt{3}p}\cdot\la P_{\chi}^0(f'), P_{\chi^{-1}}^0(f')\ra_{K,K}.
\end{equation}
   
  \[ \text{Here } \  \quad 
    P_\chi^0(f')=\frac{\#\mathrm{Pic}(\OO_p)}{\mathrm{Vol(K^\times\widehat{\Q}^\times}\backslash\widehat{K}^\times,dt)}\int_{K^\times\widehat{\Q}^\times\backslash \widehat{K}^\times} f'(P_0)^{\sigma_t}\chi(t)dt =\frac{1}{9}\sum\limits_{t\in \mathrm{Pic}(\OO_{6p})}f'(P_0)^{\sigma_t}\chi(t) , 
    \]
 and $P_{\chi^{-1}}^0(f') =\frac{1}{9}\sum\limits_{t\in \mathrm{Pic}(\OO_{6p})}f'(P_0)^{\sigma_t}\chi^{-1}(t)$  are the Heegner cycles. Also $\phi \in S_2(\Gamma_0(36))$ is the newform associated with $E$, and $(\phi, \phi)_{\Gamma_0(36)}$ is the Peterson norm of $\phi$. Here the bilinear pairing $\la\cdot,\cdot\ra_{K,K}$ corresponds to the $K$-linear map $E(\overline{K})_\Q \otimes_K E(\overline{K})_\Q \rightarrow \C$  such that $\tr_{\C/\R}\la\cdot,\cdot\ra_{K,K}$ is the $\Q$-linear N\'eron-Tate height pairing $E(\overline{K})_\Q \otimes_\Q E(\overline{K})_\Q \rightarrow \R$ over the base field $K$ \cite[p. 789]{cube sum}.\\
Let $\Omega$ be the minimal real period of $E$. Then by noting the transformations of minimal real period under cubic twisting, we obtain
\begin{equation}\label{period product}
    \Omega_1^{(q)}\Omega_2^{(q)}=\Omega^2/p.
\end{equation}
Using SageMath, we compute that $\{\Omega, \Omega \cdot \left(\frac{1}{2}+\frac{\sqrt{-3}}{6}\right)\}$ is a $\Z$-basis of the period lattice $L$ of the minimal model of $E$. So we have
\begin{equation}\label{newform phi}
    \frac{1}{\sqrt{3}}\Omega^2=2\int_{\C/L} dxdy=\int_{E(\C)}|\omega_E \wedge \overline{\omega}_E|=8\pi^2(\phi, \phi)_{\Gamma_0(36)}.
\end{equation}
Consequently, \eqref{L'} simplifies to 
\begin{equation}\label{L'new}
    L'(1,E,\chi)=3 \Omega_1^{(q)}\Omega_2^{(q)}  \la P_{\chi}^0(f'), P_{\chi^{-1}}^0(f')\ra_{K,K}.
\end{equation}
Further using  the definition of $S_1$ and the properties of the bilinear pairing (see \cite[ $(4.6)$]{hu-shu-yin} for more details), we deduce 
 \begin{equation}\label{N-T pairing}
 \begin{aligned}
    \la P_\chi^0(f'), P_{\chi^{-1}}^0(f') \ra_{K,K}&=\frac{1}{9^2} \left\langle\sum\limits_{\delta\in \Gal(L/K)} S_1^\delta \chi(\delta), \sum\limits_{\delta\in \Gal(L/K)} S_1^\delta \chi^{-1}(\delta) \right\rangle_{K,K}\\
   &=\frac{1}{27}\left(\la S_1, S_1\ra_{K,K}-\la S_1, S_1^{\delta_p}\ra_{K,K} \right),
\end{aligned}
\end{equation}
where $\delta_p$ is the generator of $\Gal(L/K)$ (see Prop. \ref{action on R_1}). Next, from the same proposition, we have $S_1^{\delta_p}=\zeta^kS_1$. As   $|1+\zeta^k|=|\zeta^k|=1,$ observe that $\widehat{h}_K((1+\zeta^k)S_1)=\widehat{h}_K(\zeta^kS_1)=\widehat{h}_K(S_1)$. Thus we compute 
$$\la S_1, S_1^{\delta_p}\ra_{K,K}=\frac{1}{2}\left( \widehat{h}_K((1+\zeta^k)S_1)-\widehat{h}_K(\zeta^kS_1)-\widehat{h}_K(S_1)\right) =-\frac{1}{2}\widehat{h}_K(S_1).$$ 
 Also $\la S_1, S_1\ra_{K,K} = \frac{1}{2}\left(4\widehat{h}_K(S_1)-2\widehat{h}_K(S_1)\right)=\widehat{h}_K(S_1)$. Therefore, 
 \begin{equation}\label{<.>}
     \la P_\chi^0(f'), P_{\chi^{-1}}^0(f') \ra_{K,K}=\frac{1}{18}\widehat{h}_K(S_1)=\frac{1}{9}\widehat{h}_\Q(S_1).
 \end{equation}
Putting all these together, from (\ref{L'new}) and (\ref{<.>}), we deduce the stated result.
\end{proof}

Recall that there is an isomorphism $\phi_k:E(L)^{\delta_p=\zeta^k} \rightarrow E_1^{(q)}(K)$. Put $\phi_k(S_1):=S$.
\begin{corollary}\label{h(S)}
Let $p \equiv 4,7 \mod9$ be a prime. Then we have
$\frac{L'(1, E_1^{(q)})L(1,E_2^{(q)})}{\Omega_1^{(q)}\Omega_2^{(q)}} =\frac{1}{3}\widehat{h}_\Q(S). \qedhere$

\end{corollary}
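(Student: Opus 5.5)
The corollary says the height of $S$ equals the height of $S_1$. So the plan is to reduce it to Theorem \ref{G-Z} by proving
\[
\widehat{h}_\Q(S)=\widehat{h}_\Q(S_1).
\]
Here $\widehat{h}_\Q(S_1)$ is understood as in Theorem \ref{G-Z}, i.e.\ normalized so that $\widehat{h}_K(S_1)=2\widehat{h}_\Q(S_1)$. (The hypothesis that $2$ is not a cube modulo $p$ is implicitly carried over from the theorem.)

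\textbf{Step 1: compatibility of $\phi_k$ with heights.}
\begin{itemize}
\item By Proposition \ref{action on R_1} and Theorem \ref{nontorsion}, $S_1=3R_1$ lies in $E(L)^{\delta_p=\zeta^k}$. Hence $S=\phi_k(S_1)=3R$ is defined.
\item The twisting map $\phi_k$ is induced by an isomorphism $E\to E_1^{(q)}$ defined over $L$. Concretely it is a change of coordinates $(x,y)\mapsto (\alpha^2 x,\alpha^3 y)$ with $\alpha$ a suitable power of $\sqrt[3]{p}$. For $q=p^2$ one first composes with the identification $E_1^{(p^2)}\simeq E_2^{(p)}$, which is again a scaling isomorphism.
\item The canonical N\'eron--Tate height is invariant under isomorphisms of elliptic curves defined over $L$. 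This holds because the naive Weil height on $\mathbb{P}^1$ changes by a bounded amount under such scalings, and canonical heights kill bounded functions.
\item Therefore $\widehat{h}_L(S_1)=\widehat{h}_L(S)$, and dividing by $[L:K]=3$ gives $\widehat{h}_K(S_1)=\widehat{h}_K(S)$.
\end{itemize}

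\textbf{Step 2: descend from $K$ to $\Q$.}
\begin{itemize}
\item Relate $\widehat{h}_K(S)$ to $\widehat{h}_\Q(S)$ for $S\in E_1^{(q)}(K)$. The point $S$ need not be rational, but by Lemma \ref{phi(Z_1)} and its proof, $R+\overline{R}$ differs from a rational nontorsion point by torsion.
\item The cleanest route is to define $\widehat{h}_\Q$ on $E_1^{(q)}(K)$ as $\frac{1}{2}\widehat{h}_K$. This is the normalization already used in equation \eqref{<.>}, and it is consistent with the usual $\widehat{h}_\Q$ on $E_1^{(q)}(\Q)$.
\item With this convention, $\widehat{h}_\Q(S)=\frac12\widehat{h}_K(S)=\frac12\widehat{h}_K(S_1)=\widehat{h}_\Q(S_1)$.
\end{itemize}

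\textbf{Step 3: conclude.} Substituting Step 2 into Theorem \ref{G-Z} gives the stated formula directly.

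The main obstacle is Step 1, namely bookkeeping the normalizations. One must make sure the height of $S_1$ in Theorem \ref{G-Z} (computed on $E$ over $L$, then divided down) matches the height of $S$ on the twisted curve over $K$, without an extra factor of $[L:K]$ or $2$.

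To check this, I would write the explicit isomorphism over $L$ and note that heights normalized by $\frac{1}{[F:\Q]}$ are independent of the field of definition $F$. Then $\widehat{h}(S_1)=\widehat{h}(S)$ as absolute heights, and any normalization factor applied in Theorem \ref{G-Z} applies equally to $S$.
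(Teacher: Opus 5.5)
Your proposal is correct and matches the paper's implicit argument; the paper states the corollary without a separate proof. Concretely, $S=\phi_k(S_1)$ is the image of $S_1$ under the scaling isomorphism $E\simeq E_1^{(q)}$, $(x,y)\mapsto(\alpha^2x,\alpha^3y)$ with $\alpha=\sqrt[3]{q}\in L$, and N\'eron--Tate heights are invariant under isomorphisms of elliptic curves, so $\widehat{h}_\Q(S)=\widehat{h}_\Q(S_1)$ and Theorem \ref{G-Z} gives the formula (with the hypothesis that $2$ is not a cube modulo $p$ and the normalization $\widehat{h}_K=2\widehat{h}_\Q$ of \eqref{<.>} carried over, as you say).
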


Now we are ready to complete the proofs of Theorems \ref{main} and \ref{mainthm2}.

\begin{proof}[Proof of Theorem \ref{mainthm2}]
As $2$ is not a cube modulo $p$,  $S_1$ is a nontorsion point (Thm. \ref{nontorsion}).  It follows from Theorem \ref{G-Z} that $L'(1, E_1^{(q)}) L(1, E_{2}^{(q)}) \neq 0$ and hence $L(1, E_{2q^2})= L(1, E_{2}^{(q)}) \neq 0$. Note that by Coates-Wiles and Rubin's \cite{rubin} result  $\Sh(E_{2q^2}/\Q)$ is  finite. Now the full BSD formula  for $E_{2q^2}$ over $\Q$ holds by the work of Burungale-Flach \cite{burungale-flach}. \qedhere
\end{proof}

\begin{proof}[Proof of Theorem \ref{main}]
As $2$ is not a cube modulo $p$, the point  $S_1$ is nontorsion. Thus we see that $L'(1, E_{1}^{(q)}) L(1, E_{2}^{(q)}) \neq 0$. This shows $L'(1, E_1^{(q)}) \neq 0$, and  by the results of Gross-Zagier \cite{gross-zagier} and Kolyvagin   \cite{kolyvagin}, and Rubin \cite{rubin}, the rank part of the BSD conjecture holds and $\Sh(E_{1}^{(q)}/\Q)$ is finite.\\ 
Using Tate's algorithm, we see that the Tamagawa numbers of $E_1^{(q)}$ and $E_2^{(q)}$ over $\Q$ are given as follows: $c_v(E_1^{(q)})=c_v(E_2^{(q)})=3$ for the places $v\mid 2p$ and for all other places $v$, $c_v(E_1^{(q)})=c_v(E_2^{(q)})=1$. Also note that $E_1^{(q)}(\Q)_\tors\simeq \Z/3\Z$ and $E_2^{(q)}(\Q)_\tors\simeq \Z/3\Z$.\\ 
Let $W$ be a generator of the free part of $E_1^{(q)}(\Q)$. The full BSD formula for the rank $1$ curve $E_1^{(q)}$ predicts that:
\begin{equation}\label{BSD for E_1^q}
\frac{L'(1,E_1^{(q)})}{\Omega_1^{(q)}\cdot \widehat{h}_\Q(W)}=   \frac{ |\Sh(E_1^{(q)}/\Q)| \cdot \prod_v c_v(E_1^{(q)})}{|E_1^{(q)}(\Q)_\tors|^2} = |\Sh(E_1^{(q)}/\Q)|.
\end{equation}
For a prime $\ell \nmid 6p$, the elliptic curve $E_1^{(q)}$ has good reduction at $\ell$ and by works of Perrin-Riou \cite{perrin-riou} and Kobayashi \cite{kobayashi} the $\ell$-part of the full BSD formula holds for $E_1^{(q)}$, that is $\ell$-adic valuation of both sides of \eqref{BSD for E_1^q} are same. Since $E_1^{(q)}$ has potentially good, ordinary reduction at $p$, the $p$-part of the full BSD formula for $E_1^{(q)}$ is also true by the work of Li, Liu, and Tian \cite{li-liu-tian}. We want to show that the $3$-adic valuation of both sides of \eqref{BSD for E_1^q} are the same. 

The full BSD formula for the rank $0$ curve $E_2^{(q)}$ holds (by Thm. \ref{mainthm2}) and is given by
\begin{equation}\label{BSD for E_2^q}
 \frac{L(1,E_2^{(q)})}{\Omega_2^{(q)}} = \frac{ |\Sh(E_2^{(q)}/\Q)| \cdot \prod_v c_v(E_2^{(q)})}{|E_2^{(q)}(\Q)_\tors|^2}  = |\Sh(E_2^{(q)}/\Q)|,
\end{equation}
and hence the $3$-adic valuation  of both sides of \eqref{BSD for E_2^q} are same. Thus showing that the $3$-adic valuation  of both sides of \eqref{BSD for E_1^q} are same is equivalent to showing that the $3$-adic valuation of both sides of the following identity
\begin{equation*}\label{BSD product}
\frac{L'(1, E_1^{(q)})L(1,E_2^{(q)})}{\Omega_1^{(q)}\Omega_2^{(q)}  \widehat{h}_\Q(W)} = |\Sh(E_1^{(q)}/\Q)|\cdot |\Sh(E_2^{(q)}/\Q) |,
\end{equation*}
 are the same and hence by Corollary \ref{h(S)}, it is equivalent to show that the $3$-adic valuation of both sides of the equation
\begin{equation}\label{S-P}
|\Sh(E_1^{(q)}/\Q)| \cdot|\Sh(E_2^{(q)}/\Q) |=\frac{1}{3} \frac{\widehat{h}_\Q(S)}{\widehat{h}_\Q(W)},
\end{equation}
 are the same. As $2$ is not a cube modulo $p$,  $\dim_{\F_3} \ \sel^3(E_1^{(q)}/\Q) =2$ and $\dim_{\F_3} \ \sel^3(E_2^{(q)}/\Q) =1$ \cite{cube sum jnt} and we have,
$$ |\Sh(E_1^{(q)}/\Q)[3^\infty]| = |\Sh(E_2^{(q)}/\Q)[3^\infty] |=1. 
$$
Recall from Section \ref{sec:nontorsion},  $Z_1 = (1-\zeta^2) R_1$, $\phi_k(Z_1)=Z$, and  $S_1=3R_1 = (1-\zeta)Z_1$. It follows that $ \widehat{h}_\Q(S) = 3 \widehat{h}_\Q( Z)$. Therefore, to prove the $3$-adic valuations of both sides of (\ref{S-P}) are the same, it suffices to show that
$\widehat{h}_\Q(Z) = u \widehat{h}_\Q(W)$, where $u\in \Z_3^\times \cap \Q$.\\
By Lemma \ref{phi(Z_1)}, $Z$ is a nontorsion point of $E_1^{(q)}$. Thus $Z=aW + b (0, q) \in E_1^{(q)}(\Q),\; a,b \in \Z$ and hence $\widehat{h}_\Q(Z) = a^2 \widehat{h}_\Q(W)$. Consequently, it reduces to show $3 \nmid a$. 
If possible, let $a=3c$ with $c\in \Z$, then writing $cW := W_1$, we get
$ Z = 3W_1+ b(0,q)$. Thus $$3R= (1-\zeta)Z = 3(1 -\zeta)W_1 + b (1-\zeta)(0,q) = 3(1-\zeta)W_1,$$ which implies $R=(1-\zeta)W_1+d(0,q)$ for $d\in \Z$, i.e., we obtain $R_1=(1-\zeta)X+Y$ with $X \in E(L)^{\delta_p= \zeta^k} \text{ and } Y \in E(L)^{\delta_p = \zeta^k}_{\tors}$, which is a contradiction by Lemma \ref{no R_1=aX+Y}. This concludes the proof that for primes $\ell \neq 2$ the $\ell$-part of BSD formula holds for $E_1^{(q)}$.

As $E_1^{(q)}$ and $E_{2q}$ are $\Q$-isogenous and the BSD formula is isogeny invariant \cite[Chap. I, \S7]{milne}, the same result holds for $E_{2q}$ as well.\qedhere
\end{proof}


\begin{thebibliography}{abcde}
    \bibitem[ABS-BS]{alpoge-bhargava-shnidman}L. Alp{\"o}ge, M. Bhargava, A. Shnidman, \textit{Integers expressible as the sum of two rational cubes}, with an appendix by A. Burungale and C. Skinner, arXiv:2210.10730 (2022).
    \bibitem[AL]{atkin-lehner}A. O. L. Atkin and J. Lehner, \textit{Hecke operators on $\Gamma_0(N)$}, Math. Ann. 185 (1970), 134-160.
    \bibitem[AS]{akbas-singerman}M. Akba\c{s} and D. Singerman, \textit{The normalizer of $\Gamma_0(N)$ in $\mathrm{PSL}_2(\R)$}, Glasgow Math. J. 32 (1990), 317-327.
    \bibitem[BF]{burungale-flach}A. Burungale and M. Flach, \textit{The conjecture of Birch and Swinnerton-Dyer for certain elliptic curves with complex multiplication}, Camb. J. Math. 12 (2024), no. 2, 357–415.
    \bibitem[Cow]{coward}D. R. Coward, \textit{Some sums of two rational cubes}, Q. J. Math. 51 (2000), no. 4, 451–464.
    \bibitem[CST1]{G-Z formula} L. Cai, J. Shu, and Y. Tian, \textit{Explicit Gross-Zagier and Waldspurger formulae}, Algebra \& Number Theory 8 (2014), no. 10, 2523–2572.
    \bibitem[CST2]{cube sum} L. Cai, J. Shu, and Y. Tian, \textit{Cube sum problem and an explicit Gross-Zagier formula}, Am. J. Math. 139 (2017), no. 3, 785–816.
    \bibitem[DV1]{mock heegner} S. Dasgupta and J. Voight, \textit{Sylvester’s problem and mock Heegner points}, Proc. Amer. Math. Soc. 146 (2018), no. 8, 3257–3273.
    \bibitem[DV2]{survey article} S. Dasgupta and J. Voight, \textit{Heegner points and Sylvester’s conjecture}, Arithmetic
    geometry, Clay Math. Proc., vol. 8, Amer. Math. Soc. Providence, RI, 2009, 91–102.
    \bibitem[Elk]{elkies} N. D. Elkies, \textit{Explicit modular towers},  arXiv preprint math/0103107 (2001).
    \bibitem[GJT]{gjt} E. Gonz{\'a}lez-Jim{\'e}nez and J. M. Tornero, \textit{Torsion of rational elliptic curves over quadratic fields}, Rev. R. Acad. Cienc. Exactas Fís. Nat. Ser. A Mat. RACSAM 108 (2014), no. 2, 923–934.  
    \bibitem[GZ]{gross-zagier} B. H. Gross and D. B. Zagier, \textit{Heegner points and derivatives of $L$-series}, Invent. Math. 84 (1986), 225–320.
    \bibitem[HSY]{hu-shu-yin} Y. Hu, J. Shu, and H. Yin, \textit{An explicit Gross–Zagier formula related to the Sylvester conjecture}, Trans. Am. Math. Soc. 372 (2019), no 10, 6905–6925.
    \bibitem[JMS1]{jha-majumdar-sury}S. Jha, D. Majumdar, and B. Sury, \textit{Binary cubic forms and rational cube sum problem}, Proceedings of the American Mathematical Society 153 (2025), no. 11, 4657-4668.
    \bibitem[JMS2]{cube sum jnt}S. Jha, D. Majumdar, and P. Shingavekar, \textit{$3$-Selmer groups, ideal class groups and the cube sum problem}, Journal of Number Theory 277 (2025), 165-200.
    \bibitem[Kob]{kobayashi} S. Kobayashi, \textit{The p-adic Gross-Zagier formula for elliptic curves at supersingular primes}, Invent. Math. 191 (2013), no. 3, 527–629.
    \bibitem[Kol]{kolyvagin}V. A. Kolyvagin, \textit{Euler systems}, The Grothendieck Festschrift, Vol. II, Progr. Math., vol. 87, Birkh\"auser Boston, Boston, MA, 1990, 435–483.
    \bibitem[Lie]{lieman} D. B. Lieman, \textit{Nonvanishing of L-series associated to cubic twists of elliptic curves}, Ann. of Math. (2) 140 (1994), no. 1, 81–108.
    \bibitem[Lig]{ligozat}G{\'e}rard Ligozat, \textit{Courbes modulaires de genre $1$}, Soci{\'e}t{\'e} math{\'e}matique de France 45 (1975), 5-80.
    \bibitem[LLT]{li-liu-tian} Y. Li, Y. Liu, and Y. Tian, \textit{On the Birch and Swinnerton-Dyer conjecture for CM elliptic curves over $\Q$}, arXiv:1605.01481 (2016).
    \bibitem[Mil]{milne}J. S. Milne, \textit{Arithmetic duality theorems}, 2006. \url{https://www.jmilne.org/math/Books/ADTnot.pdf}
    \bibitem[MS1]{majumdar-sury}D. Majumdar and B. Sury, \textit{Cyclic cubic extensions of $\Q$}, Int. J. Number Theory 18 (2022), no. 1, 1929–1955.
    \bibitem[MS2]{majumdar-shingavekar}D. Majumdar and P. Shingavekar, \textit{Cube sum problem for integers having exactly two distinct prime factors}, Proceedings-Mathematical Sciences 133 (2023), no. 2, p. 43.
    \bibitem[Neu]{neukrich} J. Neukirch, \textit{Algebraic number theory}, Springer Science \& Business Media 322 (2013).
    \bibitem[PR]{perrin-riou}  B. Perrin-Riou, \textit{Points de Heegner et d{\'e}riv{\'e}es de fonctions L p-adiques}, Invent. Math. 89(3) (1987), 455–510.
    \bibitem[Rub]{rubin}K. Rubin, \textit{Tate-Shafarevich groups and $L$-functions of elliptic curves with complex multiplication}. Invent. Math. 89 (1987), no. 3, 527–559.
    \bibitem[Sat]{satge}P. Satg{\'e}, \textit{Groupes de Selmer et corps cubiques}, J. Number Theory 23 (1986), no. 3, 294–317.
    \bibitem[Sel]{selmer} E. S. Selmer, \textit{The Diophantine equation $ax^3 + by^3 + cz^3 = 0$}, Acta Math. 87 (1951), 203–362.
    \bibitem[Shi]{shimura} G. Shimura, \textit{Introduction to the arithmetic theory of automorphic functions}, Publications of the Mathematical Society of Japan, vol. 11, Princeton University Press, Princeton, NJ, 1994.
    \bibitem[Sil]{silverman}J. H. Silverman, \textit{The arithmetic of elliptic curves}, Second edition. GTM Springer,  106 (2009), pp. 513.
    \bibitem[SY]{shu-yin} J. Shu and H. Yin, \textit{Cube sums of the forms $3p$ and $3p^2$ II }, Math. Ann. 385 (3-4) (2023), 1037–1060.
    \bibitem[Syl]{sylvester}J. J. Sylvester, \textit{On certain ternary cubic-form equations}, Am. J. Math. 2 (1879), no. 4, 357–393.
    \bibitem[Yin]{yin} Hongbo Yin, \textit{On the $8$ case of the Sylvester conjecture}, Trans. Am. Math. Soc. 375(2022), no. 4, 2705-2728.
   
    
    
    
\end{thebibliography}
\end{document}